 \def\vs{\vskip .6cm}
 \def\ra{\rightarrow} \def\nb{\nabla}
\def\G{\Gamma}   
\def\beq{\begin{equation}} \def\eeq{\end{equation}}
\def\bi{\begin{enumerate}} \def\ei{\end{enumerate}}
\def\bea{\begin{eqnarray*}} \def\eea{\end{eqnarray*}}
\def\ba{\begin{array}} \def\ea{\end{array}}
 \def\r{\end{proof}}
\def \R{\mathbb{R}} 
 \def \Z{\mathbb{Z}}  
\def\be{\begin{equation}} \def\ee{\end{equation}}
 \def\Sp{\mathrm{Sp}} 
 \def\G2{\mathrm{G}_2} \def\Spin{\mathrm{Spin}}
\def\Hol{\mathrm{Hol}}  
\def\G{{\rm G}}
\def\ng{\nabla^{\tilde g}}
\newtheorem{epr}{Proposition}[section]
\newtheorem{lem}[epr]{Lemma} 
\newtheorem{thrm}[epr]{Theorem}
\newtheorem{ath}[epr]{Theorem} 
\newtheorem{elem}[epr]{Lemma}
\newtheorem{prob}[epr]{Problem}
\theoremstyle{definition} 
\newtheorem{ede}[epr]{Definition} \newtheorem{ere}[epr]{Remark}
\newtheorem{ex}[epr]{Example}
\def\obs{\begin{ere}} \def\eobs{\end{ere}} \def\bl\begin{elem}
\def\el\end{elem} \def\bdf\begin{ede} \def\edf\end{ede}
\title{Weyl connections with special holonomy on compact conformal manifolds}
\author{Florin Belgun, Brice Flamencourt, Andrei Moroianu}
\address{Florin Belgun\\Institute of Mathematics “Simion Stoilow” of the Romanian Academy, 21 Calea Grivitei, 010702 Bucharest, Romania} \email{florin.belgun@imar.ro}
\address{Brice Flamencourt, UMPA - ENS Lyon, CNRS, 46 allée d’Italie, 69364 Lyon, France.}
\email{brice.flamencourt@ens-lyon.fr}
\address{Andrei Moroianu \\ Université Paris-Saclay, CNRS,  Laboratoire de mathématiques d'Orsay, 91405, Orsay, France, 
and Institute of Mathematics “Simion Stoilow” of the Romanian Academy, 21 Calea Grivitei, 010702 Bucharest, Romania}
\email{andrei.moroianu@math.cnrs.fr}
\subjclass[2010]{53A30, 53C05, 53C29}
\keywords{closed Weyl connections, reducible holonomy,
LCP structures, LCK structures.}
\begin{document}

\begin{abstract}
We consider compact conformal manifolds $(M,[g])$ endowed with a closed Weyl connection $\nabla$, i.e. a torsion-free connection preserving the conformal structure, which is locally but not globally the Levi-Civita connection of a metric in $[g]$. Our aim is to classify all such structures when both $\nabla$ and $\nabla^g$, the Levi-Civita connection of $g$, have special holonomy. In such a setting, $(M,[g],\nabla)$ is either flat, or irreducible, or carries a locally conformally product (LCP) structure.
Since the flat case is already completely classified, we focus on the last two cases.
When $\nabla$ has irreducible holonomy we prove that $(M,g)$ is either Vaisman, or a mapping torus of an isometry of a compact nearly Kähler or nearly parallel $\G_2$ manifold, while in the LCP case we prove that $g$ is neither Kähler nor Einstein, thus reducible by the Berger-Simons Theorem, and we obtain the local classification of such structures in terms of adapted LCP metrics.
\end{abstract}

\maketitle

\vs
\section{Introduction} A Weyl connection on a conformal manifold $(M,[g])$ is a torsion-free linear connection preserving the conformal structure $[g]$. A Weyl connection is called exact if it is the Levi-Civita connection of some metric lying in $[g]$ and closed if this property holds in the neighbourhood of each point. The analysis of exact Weyl connections thus belongs to the field of Riemannian geometry, while the closed non-exact Weyl connections fall in the area of genuine conformal geometry. In this article we will concentrate on the analysis of this latter class of connections on compact conformal manifolds.

A closed non-exact Weyl connection $\nabla$ is better understood through the study of its lift to the universal cover $\tilde M$ of $M$. Indeed, on $\tilde M$ there is a metric $h$, unique up to multiplication by a constant, belonging to the lifted conformal class and such that $\tilde \nabla$ is the Levi-Civita connection of $h$. Moreover, the deck-transformations given by $\pi_1(M)$ act as homotheties of $(\tilde M,h)$, and the assumption that $\nabla$ is not exact is equivalent to saying that $\pi_1(M)$ contains at least one non-isometric homothety. Conversely, the Levi-Civita connection of a metric $h$ on $\tilde M$ with respect to which $\pi_1(M)$ acts by homotheties which are not all isometries, descends to a closed non-exact Weyl connection on $M$.

 In \cite{ired} it is proved that on a compact conformal manifold $(M,c)$, a closed non-exact Weyl connection $ \nabla$ (or equivalently the metric $h$ on the universal cover $\tilde M$) is irreducible or flat provided that the connection is tame. This result was greatly improved successively by Matveev and Nikolayevsky \cite{MN} in the analytical case, then by Kourganoff \cite{kou} in the smooth case, who proved that only three cases can occur: the universal cover $(\tilde M, h)$ is either flat, or irreducible, or a Riemannian product between an Euclidean space and an irreducible incomplete manifold \cite[Theorem 1.5]{kou} (cf. \cite{FZ} for a more conceptual and less technical proof). This last case corresponds to $(M,[g],\nabla)$ being a locally conformally product (LCP) manifold (see Definition \ref{dlcp} below). 
 
 LCP structures have grown as a very active area of research in the last decade. In the homogeneous setting they were studied on solvmanifolds \cite{AdBM} and more generally on compact quotients of Lie groups \cite{dBM}, and their conformal vector fields have been described in \cite{FM}. A detailed presentation of LCP manifolds can be found in \cite{brice}. 

In the present work, we investigate the following:
\begin{prob}\label{problem}
Describe all compact Riemannian manifolds $(M,g)$ of dimension $n\ge 3$ whose Levi-Civita connection $\nabla^g$ has special holonomy, such that there exists a closed, non-exact Weyl connection $\nabla$ compatible with $[g]$ which also has special holonomy. 
\end{prob}
By special holonomy we mean here that the restricted holonomy groups of $\nabla$ and $\nabla^g$ are both strictly contained in $\mathrm{SO}(n)$.
Note that if $\nabla$ were exact (but different from $\nabla^g$), i.e. the Levi-Civita connection of a metric in $[g]$ non-homothetic to $g$, the above question amounts to characterize compact manifolds carrying two
non-homothetic conformally related metrics with special holonomy. This
question has been answered by the third named author \cite{m2016}.

We study separately the three possible cases, where the restricted holonomy of $\nabla$ is zero, reducible but non-zero, or irreducible. The case where $\nabla$ is flat was already classified by Fried \cite{Frie}. In every dimension $n\ge 3$, the conformal structure is induced by the quotient of $\R^n \setminus \{ 0 \}$ by the semi-direct product of a group of isometries of $\R^n$ and a homothety of ratio $\lambda > 1$, all these transformations fixing the origin. Consequently, we focus on the two remaining cases.

In the case where $(M,\nabla)$ has special {\it irreducible} holonomy, the Berger-Simons holonomy classification implies that the Riemannian manifold $(\tilde M,h)$, where $h$ is the metric induced by $\nabla$ on $\tilde M$, is either Kähler or Einstein. In the Kähler case, we can use the well-developed theory of LCK manifolds to finish the analysis, while in the Einstein case we can relate our problem to the analysis done for exact Weyl connections in \cite{m2016}. We prove in Proposition~\ref{prop2} that the only possibility is that $(M,g)$ is either Vaisman, or a mapping torus of an isometry of a compact nearly Kähler or nearly parallel $\G_2$ manifold. 

The last case, when $(M,\nabla)$ has reducible non-flat holonomy, is more difficult, and requires a deeper understanding of LCP structures. In this setting, we study separately the cases where $g$ is Kähler, Einstein or has reducible holonomy. The known examples of LCP manifolds given in \cite{brice} provide intuition concerning the results. The only examples of complex LCP manifolds constructed so far are some particular OT-manifolds, a class of manifolds introduced by Oeljeklaus and Toma \cite{OT} defined by means of algebraic number fields. When the number field has exactly two complex embeddings, the corresponding manifold admits an LCK structure, which in turn induces an LCP structure. However, these manifolds carry no Kähler metric. Consequently, there are no examples of LCP manifolds carrying a Kähler metric in their conformal class. In Theorem~\ref{prop3}, we prove that it is indeed impossible to construct such an example.

When $g$ is an Einstein metric, the conformal relation between the metric $h$ and the lift $\tilde g$ of $g$ to the universal cover $\tilde M$ of $M$ gives some link between the Ricci tensors of the two metrics. We can then exploit the particular product structure of $(\tilde M,h)$, which admits a non-trivial flat factor, in order to conclude in this case as well. We prove in Theorem~\ref{te} that no Einstein metric lies in the conformal class of an LCP manifold.

The last part of this text is devoted to the study of the case where both $h$ and $g$ have reducible holonomy. Examples of such structures are easy to construct. Indeed, every Riemannian product between a compact manifold with adapted LCP metric and another compact Riemannian manifold is again an LCP manifold (see Example~\ref{example} below or \cite[Section 3]{brice}). In Theorem~\ref{tred} below we prove that conversely, if $g$ is a reducible Riemannian metric on a compact LCP manifold $(M,[g],\nabla)$, then $g$ is adapted and $(\tilde M,\tilde g)$ is a Riemannian product $(M_1,g_1) \times (M_2,g_2)$ such that  the flat factor of $(\tilde M,h)$ is tangent to $M_1$ at each point.

At the level of the universal cover, the solutions of Problem \ref{problem} are summarized in Theorem \ref{cl} below. For a complete classification, one would need the description of discrete cocompact groups acting freely by homotheties on the simply connected Riemannian manifolds appearing in Theorem \ref{cl}, but this problem is clearly out of reach.

{\bf Acknowledgments.} This work was partly supported by the PNRR-III-C9-2023-I8 grant CF 149/31.07.2023 {\it Conformal Aspects of Geometry and Dynamics}, by the GDRI ECO-Math and by the Procope Project No. 57650868 (Germany) / 48959TL (France).

\section{Preliminaries}

\subsection{Weyl connections} Let $M$ be a compact manifold of dimension $n\ge 3$ and let $c$ be a conformal structure on $M$ (usually $c$ is the conformal class of a Riemannian metric $g$ on $M$, denoted by $c=[g]$). A {\it Weyl connection} on $M$ is a torsion-free linear connection $\nabla$ preserving the conformal structure $c$, in the sense that for every Riemannian metric $g\in c$, $\nabla_Xg=-2\theta_g(X)g$ for some $1$-form $\theta_g$ on $M$ called the {\it Lee form} of $\nb$ with respect to $g$. The Lee form of $\nb$ with respect to $g$ vanishes if and only if $\nb$ is the Levi-Civita connection of $g$, denoted by $\nb^g$.

The Weyl connection $\nb$, with the Lee form $\theta_g$ with respect to the Riemannian metric $g\in c$, acts on vector fields $Y\in C^\infty(TM)$ as follows:
\beq\label{WLC}\nb_XY=\nb^g_XY+\theta_g(Y)X+\theta_g(X)Y-g(X,Y)\theta_g^{\sharp_g},\eeq
where $\theta_g^{\sharp_g}$ is the dual vector field to the 1-form $\theta_g$ with respect to $g$.

Note that, if $g':=e^{2f}g$ is another metric in the conformal class, then 
\beq\label{tt}\theta_{g'}=\theta_g+d f.\eeq

The Weyl connection $\nb$ is called {\it closed} if $\theta_g$ is closed for one (hence for all) metrics $g\in c$ and {\it exact} if $\theta_g$ is exact (for some, hence) for all $g\in c$. From \eqref{tt}, an exact Weyl connection on $(M,c)$ is the Levi-Civita connection of a metric in the conformal class $c$.

\subsection{The metric on $\tilde M$ associated to a closed Weyl connection} Assume now that $g$ is a Riemannian metric on $M$ and $\nabla$ is a closed Weyl connection on $(M,[g])$ with Lee form $\theta_g$ with respect to $g$. We denote by $\pi:\tilde M\to M$ the universal cover of $M$ and by $\tilde g:=\pi^* g$ the induced Riemannian metric. The lift $\tilde \nb$ of $\nb$ to $\tilde M$ has Lee form $\tilde \theta:=\pi^*\theta_g$ which is exact, since $\tilde M$ is simply connected. Consequently, there exists a function $\varphi\in C^\infty (\tilde M)$, defined up to a constant, such that $\tilde\theta=d\varphi$. By \eqref{tt}, $\tilde \nb$ is the Levi-Civita connection of the metric $h:=e^{2\varphi} \tilde g$. The metric $h$ is called associated to $\nabla$, and is uniquely determined by $\nabla$ up to a multiplicative constant.

The fundamental group $\pi_1(M)$ acts on $\tilde M$ by isometries with respect to the metric $\tilde g$. Moreover, it preserves $\tilde\theta=d\varphi$, which means that $\varphi$ is a $\pi_1(M)$-{\it equivariant function}, i.e.
$$\varphi(\gamma(x))=\varphi(x)+\rho(\gamma),\ \forall\gamma\in\pi_1(M),$$
for some (nontrivial) group homomorphism $\rho:\pi_1(M)\ra\R$. We infer that
$$\gamma^*h=e^{2\rho(\gamma)}h, \ \forall \gamma\in\pi_1(M),$$
showing that $\pi_1(M)$ acts on $\tilde M$ by {\it homotheties}  with respect to the metric $h$.

In this case, \eqref{WLC} applied on $\tilde M$ becomes 
\beq\label{CH}\nb^{h}_XY=\nb^{\tilde g}_XY+d\varphi(Y)X+d\varphi(X)Y-(d\varphi)^\sharp\langle X,Y\rangle,\eeq
where we intentionally omit referring to $h$ or to $\tilde g$ in the last term, the convention being that the metric used to compute the scalar product $\langle X,Y\rangle$ is the same as the one used to ``lift indices'' $\sharp:T^*\tilde M\ra T\tilde M$.

\subsection{Holonomy issues} From now on we consider that the assumptions of Problem \ref{problem} hold. This can be equivalently stated by the fact that the restricted holonomy groups of the metrics $h$ on $\tilde M$ and $g$ on $M$ are {\it special}, that is, strictly contained in $\mathrm{SO}(n)$.

According to the classical de Rham decomposition theorem \cite[p. 185]{kn1} and the Berger-Simons holonomy classification \cite[p. 300]{besse}, there are four cases when the restricted holonomy group of a Riemannian metric (or, more generally, of a closed Weyl connection) on a $n$-dimensional manifold is special:
\begin{enumerate}
\item[(i)] the metric is irreducible and locally K\"ahler;
\item[(ii)] the metric is irreducible and Einstein with non-zero scalar curvature (either locally symmetric or the quaternionic-K\"ahler case from the Berger-Simons theorem \cite[p. 300]{besse});
\item[(iii)] the metric is irreducible and the holonomy belongs to the list 
\begin{equation}\label{berger}\mathrm{SU}(n/2)\subset\mathrm{SO}(n),\ \mathrm{Sp}(n/4)\subset\mathrm{SO}(n),\ \mathrm{G}_2\subset\mathrm{SO}(7),\ \mathrm{Spin}(7)\subset\mathrm{SO}(8)
\end{equation}
of the Berger-Simons theorem \cite[p. 300]{besse};
\item[(iv)] the metric has reducible holonomy (so is locally a product by the local de Rham theorem).
\end{enumerate}

We will discuss first the case where the Weyl connection $\nabla$ has irreducible holonomy (cases (i)--(iii) in the above list). Then, in the last section we will treat the reducible case, which is more delicate.

\section{Irreducible Weyl holonomy} 

In this section we
consider the case when $(M^n,g)$ is a compact Riemannian manifold with special holonomy and the closed non-exact Weyl connection $\nb$ has special {\it irreducible} holonomy. As explained before, the lift $\tilde\nb$ of $\nb$ to the universal cover $\tilde M$ of $M$ is the Levi-Civita connection of a metric $h$ on $\tilde M$ which belongs to the conformal class determined by the lift $\tilde g$ of $g$ to $\tilde M$. 

Because the restricted holonomy group $\Hol_0(\nb)=\Hol(\tilde M,h)$ is a Riemannian
holonomy, we need to consider the following cases:

{\bf Case (i)} $(\tilde M,h)$ is Kähler. Then $(M,g)$ is a {\it locally conformally Kähler} (LCK) compact manifold with special holonomy which is not globally conformally Kähler, since $\nabla$ is assumed to be non-exact. By Case 1. of Thm. 1.3 from \cite{fam} it follows that $(M,g)$ is a Vaisman manifold and $\nabla$ is the canonical Weyl connection of the LCK structure.

{\bf Case (ii)} The metric $h$ is irreducible and locally symmetric, or $\Hol_0(\nb)=\Sp(n/4)\Sp(1)$ for $n\ge 8$ and multiple of $4$. It is well known that in this situation $h$ is Einstein with non-zero constant scalar curvature (see \cite[Theorem 14.39]{besse} for the case of quaternionic-Kähler manifolds). However, since $h$ admits strict homotheties (which preserve the Ricci
tensor of $h$, but not its trace with respect to $h$), this case is impossible.

{\bf Case (iii)} $\Hol_0(\nb)$ belongs to the Berger list \eqref{berger}. We will not study the first two cases, since they correspond to $(\tilde M,h)$ being Kähler, when Case (i) applies. Assume now that $\Hol_0(\nb)$ is equal to $\G_2$ for $n=7$ or $\Spin(7)$ for $n=8$. Then the manifold $\tilde M$ is spin since its frame bundle reduces to the holonomy group of $h$, which is simply connected.
By a result of Wang \cite{wang}, $(M,h)$ carries a
non-trivial parallel spinor (so in particular it is Ricci-flat).

Consequently, $(M,[g],\nb)$ is {\it Einstein-Weyl}, which
by a result of Tod \cite[Prop. 2.2]{tod} implies that the Lee form $\theta_0=d\varphi_0$ of $\nabla$ with respect to the Gauduchon metric $g_0$ is $\nabla^{g_0}$-parallel. The global de Rham theorem shows that $(\tilde M,\tilde g_0)$ is isometric to $(\R\times N^{n-1},d\varphi_0^2+g_N)$ for some $n-1$-dimensional complete Riemannian manifold $(N^{n-1},g_N)$. Using the change of coordinates $r:=e^{\varphi_0}$
we deduce that the metric
$h=e^{2\varphi_0}\tilde g_0$ on $\tilde M$ is a {\it Riemannian cone}: $(\tilde M,h)=(\R_+^*\times N^{n-1},dr^2+r^2g_N)$. By \cite{baer93a}, $(N^{n-1},g_N)$ is either nearly Kähler for $n=7$, or nearly parallel $\G_2$ for $n=8$.
Moreover, the restriction of the parallel spinor of $(\tilde M,h)$ to $(N^{n-1},g_N)$ is a real Killing spinor $\psi$ with Killing constant $1/2$:
\begin{equation}\label{psi} \nabla^{g_N}_X\psi=\frac12
X\cdot\psi,\qquad\forall X\in TN.
\end{equation} 

This implies in particular that $(N^{n-1},g_N)$ is Einstein
with positive scalar curvature (thus compact by Myers' theorem), and this implies that $(M,g_0)$ is a mapping torus of an isometry of $(N^{n-1},g_N)$.

We thus end up with two conformally related metrics with special holonomy on the compact manifold $M$: the original metric $g$, and the Gauduchon metric $g_0$ of $\nabla$. We claim that $g$ is a scalar multiple of $g_0$. 

Indeed, if this is not the case, it follows from \cite[Thm. 5.1 and Thm. 6.3]{m2016} that $\R\times N$ admits a triple warped product metric, so $N$ admits a metric with reducible holonomy, which is conformal to $g_N$. We denote this metric by $g_r$. As $N$ is compact and simply connected, de Rham's decomposition theorem shows that $(N,g_r)$ is a global Riemannian product $(N_1,g_1)\times (N_2,g_2)$. By \cite[Cor. 3.4]{kr}, the conformal factor between $g_N$ and $g_r$ only depends on $N_1$ or $N_2$. Up to permuting the indices, we can therefore assume that
$g_N=e^{2f}(g_1+g_2),$ where $f:N_1\ra\R$.  

We will now exploit the relation between the $(4,0)$ curvature tensors of $g_N$ and $g_r$ (cf. \cite[Theorem 1.159]{besse}):
\begin{equation}
R^{g_N}=e^{2f}(R^{g_r}+g_r\mathbin{\bigcirc\mspace{-15mu}\wedge\mspace{3mu}}(\nabla^{g_r}df-df\otimes df+\frac12|df|_{g_r}^2g_r).
\end{equation}

Viewing the curvature applied to two vectors as a 2-form, and using the metric $g_N$ to identify vectors and 1-forms, this relation is equivalent to 
\begin{equation}
\begin{split}
R^{g_N}_{X,Y}=&R^{g_r}_{X,Y}+X^{\flat_N}\wedge\left(\nabla^{g_r}_Ydf-df(Y)df+\frac12e^{-2f}|df|_{g_r}^2Y^{\flat_N}\right)\\
&-Y^{\flat_N}\wedge\left(\nabla^{g_r}_Xdf-df(X)df+\frac12e^{-2f}|df|_{g_r}^2X^{\flat_N}\right),
\end{split}
\end{equation}
for every tangent vectors $X,Y$. Applying this relation to a vector $X\in TN_1$ and to a vector $Y\in TN_2$, and  using the fact that $R^{g_r}(X,Y)=0$, $\nabla^{g_r}_Ydf=0$ and $df(Y)=0$, we obtain 
$$R^{g_N}_{X,Y}=-Y^{\flat_N}\wedge\left(\nabla^{g_1}_Xdf-df(X)df+e^{-2f}|df|_{g_1}^2X^{\flat_N}\right).$$

On the other hand, using \eqref{psi} repeatedly we obtain
$$\frac12R^{g_N}_{X,Y}\cdot\psi=R^{g_N}_{X,Y}\psi=\frac12Y^{\flat_N}\wedge X^{\flat_N}\cdot\psi.$$

Comparing these two equations we obtain
$$-Y^{\flat_N}\wedge\left(\nabla^{g_1}_Xdf-df(X)df+e^{-2f}|df|_{g_1}^2X^{\flat_N}\right)\cdot\psi=Y^{\flat_N}\wedge X^{\flat_N}\cdot\psi,\qquad\forall X\in TN_1,\ \forall Y\in TN_2,$$
whence 
$$\nabla^{g_1}_Xdf-df(X)df+e^{-2f}|df|_{g_1}^2X^{\flat_N}=- X^{\flat_N},\qquad\forall X\in TN_1.$$

Applying this formula to $X$ yields
$(\mathrm{Hess}_{g_1}f)(X,X)=df(X)^2-|df|_{g_1}^2g_1(X,X)-e^{2f}g_r(X,X)$ for all  $X\in TN_1$. On the other hand, at
a point where $f$ attains its minimum on $M_1$, the left hand term is
non-negative, whereas the right hand term is strictly negative for $X\ne 0$, which is a contradiction. This proves that $g$ is proportional to $g_0$.

Summarizing, we have proved the following:

\begin{epr}\label{prop2}The only compact manifolds $(M,g)$ with special holonomy carrying a closed non-exact Weyl connection $\nabla$ with special irreducible holonomy are Vaisman manifolds or mapping tori of an isometry of a compact nearly Kähler or nearly parallel $\G_2$ manifold. 
\end{epr}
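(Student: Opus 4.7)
The plan is to split according to the restricted holonomy $\Hol_0(\nabla) = \Hol(\tilde M, h)$ into the three possibilities (i)--(iii) enumerated at the start of this section. I would handle two of these by short observations. In case (i), $(\tilde M, h)$ is Kähler, so $(M,g)$ is an LCK manifold with special holonomy, and since $\nabla$ is non-exact it is not globally conformally Kähler; Case 1 of \cite[Thm.\ 1.3]{fam} then identifies $(M,g)$ as Vaisman with $\nabla$ its canonical Weyl connection. Case (ii) is ruled out: there $h$ is Einstein with non-zero constant scalar curvature, so $\Ric^h = c\, h$ with $c \ne 0$, but the deck group $\pi_1(M)$ acts by non-isometric homotheties which preserve $\Ric^h$ while rescaling $h$, forcing an immediate contradiction.

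The substantive work is in case (iii), and here I would further reduce to $\Hol_0(\nabla) \in \{\G_2, \Spin(7)\}$ since the $\SU$ and $\Sp$ holonomies on the Berger list are actually Kähler and hence covered by case (i). Now $\tilde M$ is spin (the holonomy group is simply connected), so Wang's theorem \cite{wang} supplies a non-trivial parallel spinor for $h$, forcing Ricci-flatness. Hence $(M,[g],\nabla)$ is Einstein--Weyl, and by Tod's theorem \cite{tod} the Lee form $d\varphi_0$ of the Gauduchon representative $g_0 \in [g]$ is $\nabla^{g_0}$-parallel. The global de Rham theorem then writes $(\tilde M, \tilde g_0) = (\R \times N, d\varphi_0^2 + g_N)$, and the substitution $r = e^{\varphi_0}$ realises the associated metric $h$ as the Riemannian cone $dr^2 + r^2 g_N$. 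Bär's classification \cite{baer93a} identifies $(N,g_N)$ as nearly Kähler for $n=7$ or nearly parallel $\G_2$ for $n=8$; $N$ is compact by Myers, and the homothetic action of $\pi_1(M)$ descends to the realisation of $(M, g_0)$ as a mapping torus of an isometry of $N$.

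The main obstacle is then to conclude that $g$ itself is homothetic to $g_0$, so that the classification proved for $g_0$ transfers to the metric $g$ named in the statement. I would argue by contradiction: if $g$ and $g_0$ were non-homothetic metrics in $[g]$ both with special holonomy, then \cite[Thms.\ 5.1 and 6.3]{m2016} would endow $\R \times N$ with a triple warped product structure, producing in the conformal class of $g_N$ on $N$ a metric $g_r$ with reducible holonomy. Since $N$ is compact and simply connected, de Rham would split $(N, g_r) = (N_1, g_1) \times (N_2, g_2)$, and \cite[Cor.\ 3.4]{kr} would force the conformal factor $e^{2f}$ between $g_N$ and $g_r$ to depend on a single factor, say $f : N_1 \to \R$. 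The contradiction would then come from comparing two computations of $R^{g_N}_{X,Y} \cdot \psi$ for mixed vectors $X \in TN_1$, $Y \in TN_2$: one via the standard conformal change formula for the Riemann tensor (which for mixed arguments collapses using $R^{g_r}_{X,Y} = 0$, $\nabla^{g_r}_Y df = 0$ and $df(Y) = 0$), the other via the identity $R^{g_N}_{X,Y}\psi = \tfrac{1}{2} Y^{\flat} \wedge X^{\flat} \cdot \psi$ obtained from two applications of \eqref{psi}. After cancelling the Clifford multiplication by $Y^{\flat}$, this reduces to a scalar Hessian equation on $N_1$ whose left-hand side is non-negative at a minimum of $f$ while its right-hand side is strictly negative there, completing the proof.
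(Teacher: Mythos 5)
Your proposal is correct and follows essentially the same route as the paper: the same case split by $\Hol_0(\nabla)$, the same dispatching of cases (i) and (ii), the same cone/Killing-spinor argument via Wang, Tod and B\"ar in case (iii), and the same contradiction (via \cite{m2016}, \cite{kr} and the curvature--spinor comparison at a minimum of $f$) to show $g$ is homothetic to the Gauduchon metric.
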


\section{Reducible Weyl holonomy}

We consider now the remaining case, when the holonomy of the closed non-exact
Weyl connection $\nabla$ on $(M,[g])$ (or, equivalently, of the metric $h:=e^{2\varphi}\tilde g$ on the universal cover $\tilde M$) is reducible. 

Assume first that the Weyl connection $\nabla$ is flat. This case was classified in \cite{Frie}, where it is shown that the universal cover $\tilde M$ endowed with the metric $h$ (whose Levi-Civita connection is $\tilde \nabla$) is isometric to $\R^n\setminus \{0\}$ and $\pi_1(M)$ is a semi-direct product $K \rtimes \Z$ between a finite group of isometries of $\R^n$ fixing the origin and a group generated by an homothety of ratio $\lambda < 1$. Seeing $\R^n \setminus {0}$ as the product $\R_+ \times S^{n-1}$ together with the metric $dr^2 + r^2 g_S$ where $g_S$ is the round metric on the sphere, one deduces that $\pi_1(M)$ acts by isometries for the metric $\frac{1}{r^2} dr^2 + g_S$, which is conformal to the previous metric and descends to $M$. In addition, this metric is a product metric, so it is reducible, implying that all manifolds occurring in Fried's classification \cite{Frie} are solutions of Problem \ref{problem}. In fact, the same argument as in case (iii) in the previous section shows that the product metric on $S^1\times S^{n-1}$ is the only metric with special holonomy in its conformal class, because otherwise $S^{n-1}$ would be a product of two positive-dimensional manifolds, which is clearly impossible.

We will thus assume from now on that the Weyl connection $\nabla$ is non-flat and has reducible holonomy. We introduce the following terminology.

\begin{ede} \label{dlcp}
    A Weyl connection $\nabla$ on a compact conformal manifold $(M,c)$ is called a {\it locally conformally product} (LCP) structure if it is closed, non-exact, non-flat, and has reducible holonomy. An LCP structure $(c,\nabla)$ is said to be compatible with a Riemannian metric $g$ on $M$ if $g\in c$.
\end{ede}

By assumption, the metric $h$ on the universal cover of any LCP manifold has reducible holonomy. However, as $h$ is incomplete, we cannot apply de Rham's decomposition theorem even though $\tilde M$ is simply connected. Nonetheless, we have the following:
\begin{thrm} {\rm (Kourganoff \cite[Theorem 1.5]{kou})}\label{kourg}
The universal cover $(\tilde M,h)$ of a compact LCP manifold $(M,c,\nabla)$ is globally isometric to a Riemannian product $\mathbb{R}^q\times (N, g_N)$, where $\mathbb{R}^q$ ($q\ge 1$) is the flat Euclidean space, and $(N, g_N)$ is an incomplete Riemannian manifold with irreducible holonomy.
\end{thrm}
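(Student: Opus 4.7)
The plan is to combine the holonomy decomposition of $(\tilde M, h)$ with the dynamical consequences of the cocompact $\pi_1(M)$-action by homotheties, knowing that at least one element $\gamma\in\pi_1(M)$ is a strict homothety of ratio $\lambda\ne 1$, in order to extract a flat Euclidean factor of positive dimension together with an incomplete irreducible factor, forming a global isometric product of $(\tilde M, h)$. The first step is the infinitesimal de Rham decomposition: the restricted holonomy splits $T\tilde M$ orthogonally into parallel subbundles $E_0 \oplus E_1 \oplus \dots \oplus E_k$, where $E_0$ is the $\Hol_0(h)$-fixed subspace (a flat parallel distribution) and the $E_i$ for $i\ge 1$ are irreducible under $\Hol_0(h)$. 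Because $\tilde M$ is simply connected these distributions are globally defined and each integrates to a totally geodesic foliation. The $\pi_1(M)$-action preserves $\tilde\nb$ and multiplies $h$ by a positive constant, so it permutes the $E_i$ up to isomorphism; after passing to a finite-index subgroup we may assume each deck transformation preserves every $E_i$ individually, and in particular so does $\gamma$.

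The dynamical heart of the argument is to exploit $\gamma$ as a nontrivial homothety preserving each parallel factor. The reasoning already used in Case (ii) above shows that a complete irreducible Riemannian manifold cannot admit a strict homothety unless it is flat: a homothety preserves the Ricci tensor but scales the metric, so the Ricci eigenvalues must vanish, and a similar scaling argument applied to the full curvature tensor (parallel by irreducibility of the holonomy) then forces flatness. Consequently, any non-flat irreducible leaf of some $E_i$ must be incomplete. Combined with the non-flatness of $\nb$ and with the cocompactness of the $\pi_1(M)$-action, this forces $q:=\mathrm{rank}\,E_0\ge 1$ and shows the leaves of $E_0$ to be complete flat Euclidean spaces $\R^q$, while the remaining $E_i$ carry the incomplete non-flat geometry.

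The hardest step is the final globalization, i.e. promoting this leafwise picture to the claimed global isometric splitting $\tilde M \cong \R^q \times N$ with $N$ a \emph{single} irreducible incomplete manifold. Two points must be addressed: first, the local product structure coming from the holonomy decomposition has to be shown to integrate globally, which relies on the completeness of the flat leaves together with the simple connectedness of $\tilde M$; second, the possibility of several distinct non-flat irreducible parallel factors must be ruled out, and this is where the cocompactness of the $\pi_1(M)$-action, the strict homothety $\gamma$, and the non-vanishing homothety character $\rho:\pi_1(M)\to\R$ must interact in a subtle way, since a second non-flat irreducible factor would generate dynamics on $M$ incompatible with a free, properly discontinuous cocompact action. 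Proving the uniqueness of the non-flat factor is the technical core of Kourganoff's argument in \cite{kou}, with a more conceptual alternative provided by Flamencourt-Zeghib \cite{FZ}, and I would expect this to be the main obstacle in any self-contained treatment.
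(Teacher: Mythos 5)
This statement is quoted verbatim from Kourganoff \cite[Theorem 1.5]{kou}; the paper offers no proof of it, so there is nothing internal to compare your attempt against. Judged on its own terms, your proposal is an outline rather than a proof, and it has a genuine gap in exactly the places you yourself flag: you do not establish the global splitting of the \emph{incomplete} metric $h$ (de Rham's global decomposition theorem is unavailable here precisely because $h$ is incomplete, so ``the local product structure integrates globally'' cannot be waved through via simple connectedness and completeness of the flat leaves alone), and you do not prove that there is exactly one non-flat irreducible factor and that $q\ge 1$. These two points are not finishing touches; they \emph{are} the theorem. Deferring them to ``the technical core of Kourganoff's argument'' means the proposal proves nothing beyond the infinitesimal holonomy decomposition, which is standard.

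There is also a concrete error in the one substantive argument you do give. You claim that a complete irreducible Riemannian manifold admitting a strict homothety must be flat because ``the Ricci eigenvalues must vanish'' and because the curvature tensor is ``parallel by irreducibility of the holonomy.'' The curvature tensor is not parallel for a general irreducible holonomy group (that holds only for locally symmetric spaces), and the Ricci-scaling argument kills the Einstein constant but says nothing for non-Einstein metrics. The underlying fact is true --- a complete Riemannian manifold admitting a non-isometric homothety is flat Euclidean space --- but the correct proof goes through the contraction fixed point: a homothety of ratio $\lambda<1$ of a complete metric space has a fixed point $p$, the pointwise norm $|R|_h$ transforms by a definite power of $\lambda$ under pullback, and iterating the homothety towards $p$ forces $|R|_h\equiv 0$. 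Moreover, even with this corrected, the statement applies to the whole manifold or to leaves that are themselves complete and invariant under a strict homothety; you would still need to justify why each individual non-flat leaf inherits a strict homothety, and why incompleteness of the leaves combined with cocompactness yields the asserted structure. None of this is routine, which is why the paper cites \cite{kou} (and \cite{FZ}) rather than reproving it.
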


Our assumption throughout this section is thus that $([g],\nabla)$ is an LCP structure,
and that $g$ has special holonomy. We will distinguish 3 cases,
according to the type of $g$: Kähler, irreducible
Einstein, and reducible.

In view of Theorem~\ref{kourg}, we fix some notations on LCP structures that we will use until the end of this section. For any LCP structure $([g], \nabla)$ on the compact manifold $M$ we denote by $\tilde g$ the lift of the metric $g$ to the universal cover $\tilde M$ of $M$ and by $h:=e^{2\varphi}\tilde g$ the reducible Riemannian metric on $\tilde M$ for which $\tilde \nabla=\nabla^{h}$. With respect to product decomposition $(\tilde M,h)\cong\mathbb{R}^q\times (N, g_N)$ where $\R^q$ is an Euclidean space and $(N, g_N)$ is an irreducible incomplete manifold. the tangent bundle of $\tilde M$ decomposes into the orthogonal $\nabla^h$-parallel direct sum $T \R^q \oplus TN$. We call $T \R^q$ the {\it flat distribution} and $T N$ the {\it non-flat distribution} of the LCP structure.

\subsection{LCP structures on compact Kähler manifolds} The aim of this subsection is to prove the following:
\begin{ath}\label{prop3} On compact Kähler manifolds, there are no LCP structures compatible with the Kähler metric.
\end{ath}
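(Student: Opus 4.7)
The plan is to argue by contradiction. Lifting to the universal cover, we obtain the Kähler metric $\tilde g = \pi^* g$ with $\ng$-parallel complex structure $J$, and the metric $h = e^{2\varphi}\tilde g$ whose Levi-Civita connection is $\tilde\nabla = \nabla^h$. By Theorem~\ref{kourg}, $(\tilde M, h) \cong \R^q \times (N, g_N)$ with $q \ge 1$ and $(N, g_N)$ irreducible incomplete; denote by $T\tilde M = E_1 \oplus E_2$ the corresponding $\nabla^h$-parallel orthogonal splitting, with $E_1$ the flat factor. The crux of the argument lies in the tension between $\ng J = 0$ and the reducible product structure of $h$. Combining the conformal change formula \eqref{CH} with $\ng J = 0$, a direct calculation yields
\[
(\nabla^h_X J)Y = \tilde g(JX, Y)\,\xi - \tilde g(Y, J\xi)\,X + \tilde g(X, Y)\,J\xi - \tilde g(Y, \xi)\,JX,
\]
where $\xi := (d\varphi)^\sharp$ is the $\tilde g$-gradient of $\varphi$; this identity quantifies the failure of $J$ to be $\nabla^h$-parallel.

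Next, I would decompose $J$ in block form with respect to $E_1 \oplus E_2$: for $v \in E_1$, write $Jv = Av + Cv$ with $A : E_1 \to E_1$ and $C : E_1 \to E_2$. Since $h$ is Hermitian with respect to $J$, the block $A$ is $h$-skew-symmetric; combined with $J^2 = -\id$ this yields the algebraic constraint $-A^2 + C^{\mathrm T} C = \id_{E_1}$, where $C^{\mathrm T} : E_2 \to E_1$ denotes the $h$-adjoint of $C$. In particular, for $q = 1$ skew-symmetry of $A$ on a one-dimensional space forces $A = 0$, so $JE_1 \subseteq E_2$. Specializing the displayed identity to $X, Y \in E_1$ and projecting the result separately onto the $\nabla^h$-parallel factors $E_1$ and $E_2$, one extracts a system of first-order PDEs coupling the derivatives of $\varphi$ with the blocks $A$ and $C$. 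Using the algebraic constraint above, these PDEs are expected to force $\varphi$ to be (essentially) independent of the $\R^q$-coordinate, reducing $\varphi$ to a function on the $N$-factor alone.

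Finally, I would combine this conclusion with the $\pi_1(M)$-equivariance $\varphi \circ \gamma = \varphi + \log \lambda(\gamma)$ (with $\lambda : \pi_1(M) \to \R_+^*$ nontrivial, since $\nabla$ is non-exact) and with the cocompactness of the $\pi_1(M)$-action on $\R^q \times N$. If $\varphi$ depends only on $N$, then the homothety component of any non-isometric $\gamma \in \pi_1(M)$ must scale the $N$-factor while acting by isometries on the $\R^q$-factor, which is incompatible with producing a compact quotient of $\R^q \times N$ given the incompleteness of $N$, yielding the desired contradiction. The principal technical obstacle is the case $q \ge 2$, where the block $A$ need not vanish and the algebraic analysis of the PDE system becomes significantly more intricate; the case $q = 2$ requires special care, since any Hermitian metric on a two-dimensional leaf of the flat foliation is automatically Kähler, so no local constraint is obtained by restricting the Kähler condition to leaves.
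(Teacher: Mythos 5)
Your opening formula for $(\nabla^h_XJ)Y$ is correct, and the block decomposition of $J$ relative to $T\R^q\oplus TN$ (with $A=0$ when $q=1$) is a reasonable starting point that parallels the paper's case distinction. But the two steps that are supposed to produce the contradiction are, respectively, not carried out and actually false. First, the PDE analysis is only ``expected to'' force $\varphi$ to be independent of the $\R^q$-coordinate; even granting this, it would merely show that $g$ is an \emph{adapted} metric in the sense of \cite[Definition 3.8]{brice}. Every LCP structure admits adapted metrics (\cite[Proposition 3.6]{brice}), so this conclusion carries no contradiction by itself and cannot be where the Kähler hypothesis bites. Second, the final step is wrong: a strict homothety of the product metric $h=g_{\R^q}+g_N$ scales \emph{both} factors by the same ratio (it does not act by isometries on $\R^q$), and there is no incompatibility whatsoever between $\varphi$ being a function on $N$, the equivariance $\varphi\circ\gamma=\varphi+\log\lambda(\gamma)$, cocompactness, and the incompleteness of $N$ --- this is precisely the structure of every compact LCP manifold, realized for instance by the OT-manifolds. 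So the claimed contradiction evaporates.

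What is missing is any use of the compactness of $M$ through integration, which is where the actual proof lives. In the paper, when both factors have dimension at least $2$ the contradiction comes from an integral identity over $M$ (equation \eqref{theta2'}, derived from the second formula on page 143 of \cite{fam}) which forces $\theta=0$; when the flat factor is one-dimensional, one needs a long chain of Kähler-specific arguments (the anticommutation of $d^c$ and $\delta$, the global $i\partial\bar\partial$-lemma, harmonicity of $\theta_0$) together with several integrations over $M$ to reduce to $\theta=bJ\xi$, after which the final contradiction again comes from integrating a divergence ($\delta^g(e^\psi dt)=e^\psi\ln\lambda$) over the compact quotient. None of this global analytic input appears in your outline, and without it the local/algebraic information you extract from $\ng J=0$ is insufficient: locally conformally reducible Kähler metrics do exist, so the obstruction is genuinely global.
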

\begin{proof}

Assume that $(M,g,J)$ is a compact Kähler
manifold admitting an LCP structure
$([g],\nabla)$. This situation is similar to the one studied in
Section 6 of \cite{fam}, except that loc.\ cit.\ dealt with reducible metrics which are locally conformally Kähler, while here we
consider Kähler metrics which are locally conformally reducible.

On the universal cover $\tilde M$ of $M$ 
the metric $\tilde g$ is Kähler with respect to the lift $\tilde J$.
In order to use some results from \cite{fam}, we introduce the notation $D_1:=T\R^q$ and $D_2:=TN$.

Consider first the case when the dimensions $n_1$ and $n_2$ of $D_1$
and $D_2$ are both at least 2. Most arguments in Theorem 6.2 in
\cite{fam} are valid without the compactness assumption. More
precisely, the second formula on page 143 of \cite{fam} shows the following:

\begin{epr} Assume that $T\tilde M=D_1\oplus D_2$ is a $\nabla^{h}$-parallel splitting on a Riemannian manifold
$(\tilde M^n,h)$ with $n_i:=\dim(D_i)\ge 2$, and assume
moreover that a conformally related metric $\tilde g:=e^{-2\varphi}h$
on $\tilde M$ is Kähler. Then 
\begin{equation}\label{theta} 
\frac1{n_1}\left(|\tilde\theta_1|_h^2-{\delta^h\tilde\theta_1}\right)+\frac1{n_2}\left(|\tilde\theta_2|_h^2-{\delta^h\tilde\theta_2}\right)-|\tilde\theta|_h^2=0,
\end{equation}
where $\tilde\theta_i$ denotes the restriction of $\tilde\theta:=d\varphi$ to $D_i$.
\end{epr}

The conformal change formulas (cf. \cite{besse}, Theorem 1.159) give
$$|\alpha|_{h}^2=e^{-2\varphi}|\alpha|_{\tilde g}^2,\qquad \delta^{h}\alpha=e^{-2\varphi}(\delta^{\tilde g}\alpha-(n-2)\tilde g(d\varphi,\alpha))$$
for every $1$-form $\alpha$. Equation \eqref{theta} thus becomes 
\begin{equation}\label{theta1}
\frac1{n_1}\left(|\tilde\theta_1|_{\tilde g}^2-{\delta^{\tilde g}\tilde\theta_1}+(n-2)\tilde g(\tilde\theta,\tilde\theta_1)\right)+\frac1{n_2}\left(|\tilde\theta_2|_{\tilde g}^2-{\delta^{\tilde g}\tilde\theta_2}+(n-2)\tilde g(\tilde\theta,\tilde\theta_1)\right)-|\tilde\theta|_{\tilde g}^2=0.
\end{equation} As $n_1+n_2=n$, $\tilde g(\tilde\theta,\tilde\theta_i)=|\tilde\theta_i|_{\tilde g}^2$
and $|\tilde\theta|_{\tilde g}^2=|\tilde\theta_1|_{\tilde g}^2+|\tilde\theta_2|_{\tilde g}^2$, we thus get
\begin{equation}\label{theta2}
\frac1{n_1}\left((n_2-1)|\tilde\theta_1|_{\tilde g}^2-{\delta^{\tilde g}\tilde\theta_1}\right)+\frac1{n_2}\left((n_1-1)|\tilde\theta_2|_{\tilde g}^2-{\delta^{\tilde g}\tilde\theta_2}\right)=0.
\end{equation}
The forms $\tilde\theta_i$ are $\pi_1(M)$-invariant, so they are pull-backs of 1- forms $\theta_i$ defined on $M$. The relation \eqref{theta2} thus projects to the compact quotient
$M=\tilde M/\pi_1(M)$ into
\begin{equation}\label{theta2'}
\frac1{n_1}\left((n_2-1)|\theta_1|_{g}^2-{\delta^{g}\theta_1}\right)+\frac1{n_2}\left((n_1-1)|\theta_2|_{g}^2-{\delta^{g}\theta_2}\right)=0.
\end{equation}
 so after integration over $M$ we obtain
$\theta=0$, which is excluded. 

It remains to study the case when one of the distributions $D_1$ or $D_2$ has dimension 1. Since $\tilde M$ is simply connected, this distribution determines a $\nabla^h$-parallel vector
field $\zeta$ on $\tilde M$ of unit length with respect to $h$. Then the vector field $\tilde\xi:=e^{\varphi}\zeta$ has unit length with respect to $\tilde g$, and by 
\eqref{CH} (applied to $\tilde g$ instead of $g$) we get 
\begin{equation}\label{nxi}\nabla^{\tilde g}_X\tilde \xi=\nabla^h_X\tilde\xi-d\varphi(\tilde \xi)X-d\varphi(X)\tilde \xi+\tilde g(X,\tilde\xi)(d\varphi)^\sharp=-d\varphi(\tilde \xi)X+\tilde g(X,\tilde\xi)(d\varphi)^\sharp,\end{equation}
where we have used that $\nabla^h_X\tilde\xi=d\varphi(X)e^\varphi\zeta=d\varphi(X)\tilde\xi$.
Up to passing to a double cover of $M$ if necessary, $\tilde\xi$ projects to a unit length vector field
$\xi$ on $(M,g)$ which by \eqref{nxi} satisfies
\begin{equation}\label{xi} \nabla^g_X\xi=-\theta(\xi)X+g(X,\xi)\theta
\end{equation} (we identify from now on vectors and 1-forms using the
metric $g$). We decompose $\theta$ as 
$$\theta=a\xi+bJ\xi+\theta_0,$$
where $a:=\theta(\xi)$, $b:=\theta(J\xi)$ and
$\theta_0(\xi)=\theta_0(J\xi)=0$. Since $J$ is $\nabla^g$-parallel,
\eqref{xi} immediately gives
\begin{equation}\label{xi1} \nabla^g_X\xi=-aX+g(X,\xi)\theta, \qquad
\nabla^g_XJ\xi=-aJX+g(X,\xi)J\theta,\qquad\forall X\in TM.
\end{equation} 

Let us denote by $m$ the complex dimension of $M$, so that $n=2m$, $m>1$. Using a local orthonormal basis
$\{e_i\}_{i=1,\ldots,2m}$ of the tangent bundle and the relations
$d=\sum e_i\wedge\nabla^g_{e_i}$,  $\delta=-\sum
e_i\lrcorner\nabla^g_{e_i}$, $d^c=\sum Je_i\wedge\nabla^g_{e_i}$,
$\delta^c=-\sum Je_i\lrcorner\nabla^g_{e_i}$, we readily compute
\begin{equation}\label{dxi} d\xi=-\theta\wedge\xi, \qquad
d^c\xi=2a\Omega-\theta\wedge J\xi, \qquad \delta\xi=(2m-1)a,\qquad
\delta J\xi=b
\end{equation}
(here $\Omega:=g(J\cdot,\cdot)$ is the Kähler form).
The anti-commutation of $d^c$ and $\delta$ (cf. \cite{kg}) yields
\begin{eqnarray*} 0&=&d^c\delta\xi+\delta d^c\xi=(2m-1)d^c a-\sum
e_i\lrcorner(2e_i(a)\Omega-\nabla^g_{e_i}\theta\wedge
J\xi-\theta\wedge\nabla^g_{e_i}J\xi)\\ &=&(2m-1)Jda-2J da-\delta\theta
J\xi-\nabla^g_{J\xi}\theta+\nabla^g_\theta(J\xi)+(\delta(J\xi))\theta\\
&=&(2m-3)Jda-\delta\theta
J\xi-\nabla^g_{J\xi}\theta+\nabla^g_\theta(J\xi)+b\theta.\\
\end{eqnarray*} 
Since \eqref{xi1} implies $\nabla^g_\theta J\xi=0$, we obtain thus
\begin{equation}\label{xi3} (2m-3)Jda-\delta\theta J\xi-\nabla^g_{J\xi}\theta +b\theta =0.
\end{equation}
From \eqref{xi1} together with the fact that $d\theta=0$, we obtain for
every vector field $X$:
$$g(X,\nabla^g_{J\xi}\theta)=g(J\xi,\nabla^g_X\theta)=X(b)-g(\nabla^g_XJ\xi,\theta)=X(b)+ag(JX,\theta),$$ 
whence $\nabla^g_{J\xi}\theta=db-aJ\theta$. 
Equation \eqref{xi3} thus
reads 
\begin{equation}\label{z} 0=(2m-3)Jda-\delta\theta
J\xi-db+aJ\theta+b\theta.
\end{equation} We take the scalar product with $J\xi$ in \eqref{z} and
obtain
$$0=(2m-3)\xi(a)-\delta\theta-{J\xi}(b)+a^2+b^2,$$
which after an integration over $M$ and use of the divergence theorem and \eqref{dxi}
yields
\begin{eqnarray*} 0&=&\int_M
\left((2m-3)\xi(a)-{J\xi}(b)+a^2+b^2\right) d\mu_g=\int_M
\left((2m-3)a\delta \xi-b\delta(J\xi)+a^2+b^2
\right)d\mu_g\\&=&4(m-1)^2\int_M a^2d\mu_g.
\end{eqnarray*} This shows that the function $a$ vanishes identically,
and thus \eqref{z} becomes 
\begin{equation}\label{z1} 0=\delta\theta J\xi+db-b\theta.
\end{equation} From \eqref{xi1} and \eqref{z1} we get 
\begin{equation}\label{dbj}d(bJ\xi)=db\wedge
J\xi+bdJ\xi=b\theta\wedge J\xi-bJ\theta\wedge \xi\in \Omega^{(1,1)}M.
\end{equation} The global $i\partial\bar\partial$-Lemma
(cf. \cite{kg}) shows that there exists a real function $\psi$ on $M$
such that 
$$bJ\theta\wedge \xi-b\theta\wedge J\xi=i\partial\bar\partial \psi.$$
Applying the Lefschetz operator $\Lambda$ to this relation and using
the commutation relation $[\Lambda,\partial]=i\bar\partial^*$ (see
\cite{kg}, Eq. (14.15)), we get 
$$\Delta \psi=\frac12\Delta^{\bar\partial}\psi=\frac12\bar\partial^*\bar\partial \psi=-i[\Lambda,\partial]\bar\partial \psi=-b\Lambda(J\theta\wedge \xi-\theta\wedge J\xi)=0,$$
since $\Lambda(J\theta\wedge \xi)=-g(\theta,\xi)=-a=0$, and similarly
$\Lambda(\theta\wedge J\xi)=0$. It follows that $\psi$ is constant, so 
\begin{equation}\label{bj} bJ\theta\wedge \xi=b\theta\wedge J\xi.
\end{equation} 

We now remark that the 1-form $\theta_0$ is
harmonic. Indeed, $d\theta_0=d\theta-d(bJ\xi)=0$ by \eqref{dbj} and
\eqref{bj} and 
$$\delta\theta_0=\delta\theta-\delta(b J\xi)=\delta\theta-b\delta J\xi+J\xi(b)=\delta\theta-b^2+J\xi(b)=0$$
by \eqref{z1}. Since $(M,g,J)$ is compact Kähler, $J\theta_0$ must be
harmonic too, so in particular $dJ\theta_0=0$. We then compute
$$0=d(dJ\xi)=-d(J\theta\wedge\xi)=-d(J\theta_0\wedge\xi)=J\theta_0\wedge d\xi=J\theta_0\wedge \theta\wedge\xi.$$
Since $J\theta_0$, $\xi$ and $\theta=\theta_0+b J\xi$ are mutually orthogonal, this 
shows that $\theta_0$ vanishes identically, so 
\begin{equation}\label{thetab}\theta=bJ\xi.
\end{equation} 

The relations \eqref{xi1} now read 
\begin{equation}\label{xi2} \nabla^g_X\xi=bg(X,\xi)J\xi, \qquad
\nabla^g_XJ\xi=-bg(X,\xi)\xi,\qquad\forall X\in TM,
\end{equation} thus showing that the distribution spanned by $\xi$ and
$J\xi$ is $\nabla^g$-parallel. With the equalities \eqref{xi2}, one has
\begin{equation} \label{commutation}
[e^{-\varphi} \xi, J \xi] = \nabla^g_{e^{-\varphi} \xi} J \xi - \nabla^g_{J \xi} e^{-\varphi} \xi = - b e^{-\varphi} \xi + b e^{-\varphi} \xi = 0,
\end{equation}
and
\begin{align} \label{dxii}
d (e^{-\varphi} \xi) = d (J \xi) = 0.
\end{align}

We now consider the universal cover $\tilde M$ of $M$, endowed with the Kähler metric $\tilde g$. We will denote by $\eta$ and $\tilde J \eta$ the one-forms dual to $\tilde \xi$ and $\tilde J \tilde \xi$ via the metric $\tilde g$ in order to avoid confusions. The previous analysis shows that the distribution $(\tilde \xi, \tilde J \tilde \xi)$ is $\tilde g$-parallel. Since $(\Tilde M, \tilde g)$ is simply connected and complete, one can use the de Rham decomposition theorem to obtain
\begin{equation}\label{riemprod}
(\tilde M, \tilde g) = (M_0, g_0) \times (K, g_K)
\end{equation}
where $M_0$ is the integral manifold of the parallel distribution $(\tilde \xi, \tilde J \tilde \xi)$ endowed with the metric $g_0 := \eta^2 + (\tilde J \eta)^2$ and $(K, g_K, \tilde J \vert_K)$ is a Kähler manifold. The relations \eqref{xi2} give that $\tilde J \tilde \xi$ is a geodesic vector field, therefore its integral curve through any point $p \in M_0$ is the geodesic starting at $p$ with speed $\tilde J \tilde \xi_p$. Hence, the completeness of $(M_0, g_0)$ implies that the flow $\psi^J_t$ of $\tilde J \tilde \xi$ is defined for all times. In addition, the flow $\psi_s$ of the vector field $\zeta = e^{-\varphi} \tilde \xi$ is also defined for any time by definition of $\zeta$. We fix $p \in M_0$ and using \eqref{dxii} and the simple connectedness of $M_0$, we know there exist two functions $\sigma, \tau$ such that $d \sigma = e^{-\varphi} \eta$, $d \tau = \tilde J \eta$, $\sigma(p) = \tau(p) = 0$.

We have defined so far all the necessary objects to construct a diffeomorphism between $M_0$ and $\mathbb{R}^2$. Namely, we consider the maps:
\begin{align} \label{diffM0}
f_1 : \mathbb{R}^2 \to M_ 0, (s,t) \mapsto \psi_s \circ \psi^J_t (p), && f_2 : M_0 \to \mathbb{R}^2, x \mapsto (\sigma(x), \tau(x)).
\end{align}
By \eqref{commutation}, the two flow maps $\psi$ and $\psi^J$ commute, and then easy computations give
\begin{align} \label{differentialf}
d (f_1 \circ f_2) = \mathrm{id}, && f_1 \circ f_2 (p) = (0,0), && d (f_2 \circ f_1) = \mathrm{id}, && f_2 \circ f_1 (0,0) = p,
\end{align}
so $f_1$ and $f_2$ are inverse to each other. Consequently, $f_1$ is a diffeomorphism, and after computing the pull-back of $g_0$ by $f_1$, one obtains in the new coordinates:
\begin{equation} \label{metricM0}
(M_0, g_0) \simeq (\mathbb{R}^2, e^{-2 \varphi} d s^2 + d t^2).
\end{equation}
In these coordinates one has $\eta=e^{\varphi}ds$ and $\tilde J \eta =dt$.
Note that the function $\varphi$, viewed as a function on $\mathbb{R}^2$, only depends on $t$ due to the fact that $\zeta(\varphi) = \tilde\theta (\zeta) = \tilde\theta(e^{-\varphi}\tilde\xi)=0$  by \eqref{thetab}.

We can compute the scalar $S_0$ curvature of $(M_0,g_0)$ using the relations \eqref{xi2} and we obtain
\begin{equation}
S_0 = (\tilde J \tilde \xi)(\tilde b) - \tilde b^2
\end{equation}
where $\tilde b$ is the lift of $b$ to $\tilde M$. Thus the Riemannian surface $(M_0,g_0)$ is flat if and only if $(J \xi)(b) - b^2 = 0$. If this were the case, we would have $b^2 = 0$ at some point where $b$ attains its maximum or its minimum (which exist since $M$ is compact), whence $b \cong 0$, yielding that $\varphi$ is constant by equation~\eqref{thetab}. This is impossible since $\varphi$ is unbounded.

The Riemannian surface $(M_0, g_0)$ is thus irreducible. Moreover, the action of the fundamental group  of $M$ preserves the distribution spanned by $\frac{\partial}{\partial s}$ by assumption, so we deduce that $\pi_1(M)$ preserves the factor $M_0$ in the decomposition $\tilde M \simeq M_0 \times K$, because $\pi_1(M)$ acts by isometries on $(\tilde M, \tilde g)$, thus its action preserves the de Rham decomposition of $(\tilde M, \tilde g)$. Hence, $\pi_1(M)$ preserves the decomposition $M_0 \simeq \R \times \R$. We consider the restriction $H$ of $\pi_1(M)$ to the second factor $\R$ (i.e. the factor with the $t$ coordinate). Clearly $H$ preserves the 1-form $\tilde J \eta =dt$, so 
$H$ contains only translations.

The function $\varphi$ being equivariant, for any $\gamma \in H$ there exists a constant $\lambda_\gamma > 0$ such that
 \begin{equation} \label{equivphi}
 \gamma^* \varphi (t) = \varphi(t) + \ln \lambda_\gamma.
 \end{equation}

If $H$ is of rank $1$ (as an abelian group), then it is discrete and generated by a single translation. If it is of rank at least $2$, then it is dense in $\R$ (as a group of translations), and the map that associates to a translation in $H$ its constant given by equation~\eqref{equivphi} is a group homomorphism, so $\varphi$ is an affine map by continuity.

Summarizing the previous discussion, no matter whether $\varphi$ is affine or not, there always exists $\lambda \in \R_+ \setminus \{0,1\}$ such that the function $t\mapsto\varphi(t)-t \ln \lambda$ is $\pi_1(M)$-invariant, thus it can be written as the pull-back of a function $\psi$ on $M$.
We will now compute the co-differential of the 1-form $dt$ on $(\tilde M,\tilde g)$. By \eqref{riemprod} and \eqref{metricM0}, this is equal to the co-differential of $dt$ on $(\mathbb{R}^2, e^{-2 \varphi} d s^2 + d t^2).$ Consider the orthonormal frame $(e_1:=e^{\varphi(t)}\frac{\partial}{\partial s},e_2:=\frac{\partial}{\partial t})$ on $(\mathbb{R}^2, e^{-2 \varphi} d s^2 + d t^2).$ We have $[e_1,e_2]=-\varphi'(t) e_1$, so using the Koszul formula we readily obtain
\[
\nabla_{e_1}e_1=\varphi'(t)e_2,\qquad \nabla_{e_1}e_2=-\varphi'(t)e_1,\qquad \nabla_{e_2}e_1= \nabla_{e_2}e_2=0.
\]
This gives $\delta^{\tilde g}dt=\varphi'(t)=\psi'(t)+\ln\lambda$ and consequently we obtain $\delta^{g}(e^\psi dt)=e^\psi\ln\lambda$ on $(M,g)$ (we denoted here by the same symbol the projection of $dt$ to $M$).  Integrating against the volume form of $g$ yields $\ln\lambda=0$ which is absurd because $\varphi$ is unbounded. This concludes the proof.
\end{proof}

\subsection{LCP structures on compact Einstein manifolds} In this subsection we will prove the following:

\begin{ath}\label{te}
On a compact Einstein manifold, there are no LCP structures compatible with the Einstein metric.
\end{ath}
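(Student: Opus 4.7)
The plan is to suppose for contradiction that $g$ is Einstein with constant $\lambda$ and admits a compatible LCP structure, and derive a contradiction by combining the conformal change formula for the Ricci tensor with the product decomposition $\tilde M=\R^q\times N$ from Theorem~\ref{kourg}. Writing $h=e^{2\varphi}\tilde g$ and $u:=e^\varphi$, and using the identity $\nabla^hd\varphi+d\varphi\otimes d\varphi=u^{-1}\nabla^hdu$, the Einstein condition $\Ric^{\tilde g}=\lambda\tilde g$ rewrites as the tensor equation
\[
(n-2)\nabla^hdu+u\,\Ric^h=\alpha\,h,\qquad\alpha:=\tfrac{\lambda}{u}-u\Delta_h\varphi+(n-2)u|d\varphi|_h^2 .
\]
Since $\Ric^h$ vanishes on the flat distribution $T\R^q$ and all tensors above split with respect to $T\tilde M=T\R^q\oplus TN$, evaluating the identity on mixed vectors $X\in T\R^q$, $Y\in TN$ gives $(\nabla^hdu)(X,Y)=0$. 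The parallelism of $T\R^q$ under $\nabla^h$ shows this is equivalent to $Y(\partial_{x_i}u)=0$ for every $Y\in TN$, so $u$ separates additively as $u(x,y)=F(x)+G(y)$.

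The technical heart of the proof is to show that $F$ must be constant. Evaluating the identity on two flat vectors yields $\partial_{x_i}\partial_{x_j}F=\alpha\,\delta_{ij}/(n-2)$: the off-diagonal relations force $F=\sum_iF_i(x_i)$, and the diagonal ones (for $q\ge2$; an analogous ODE analysis handles $q=1$) show that each $F_i''(x_i)$ equals a common constant $c$, so $F(x)=\tfrac c2|x|^2+\langle a,x\rangle+b$. Now $\pi_1(M)$ acts on $(\tilde M,h)$ by homotheties $\gamma=(\gamma_1,\gamma_2)$ of the same ratio $\lambda_\gamma$ on both factors, and because $\nabla$ is not exact there is some $\gamma$ with $\lambda_\gamma\ne1$. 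The equivariance $\gamma^*u=\lambda_\gamma u$ then forces $F(\gamma_1x)=\lambda_\gamma F(x)+c_\gamma$ for a scalar $c_\gamma$, and comparing the coefficient of $|x|^2$ on both sides gives $c\lambda_\gamma(\lambda_\gamma-1)=0$, hence $c=0$. Since $u=F+G>0$ on all of $\R^q\times N$ and $F$ is now affine on $\R^q$, a fixed slice $y=y_0$ forces the linear part $a$ to vanish as well. Thus $F$ is constant, $\varphi$ depends only on $y\in N$, and $\alpha\equiv0$.

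The last step is to extract a contradiction. From $\alpha\equiv0$ we get $\lambda e^{-2\varphi}=\Delta_h\varphi-(n-2)|d\varphi|_h^2$, and the conformal change formula for the Laplacian then yields $\Delta_{\tilde g}\varphi\equiv\lambda$ on $\tilde M$. Since $d\varphi=\pi^*\theta$ for the Lee form $\theta$ of $\nabla$ on $M$, this descends to $-\delta^g\theta\equiv\lambda$ on $M$; integrating against $d\mu_g$ and applying Stokes' theorem forces $\lambda=0$. Hence $g$ is Ricci-flat, and $\theta$ is closed and coclosed, thus harmonic. Bochner's formula on the compact Ricci-flat $(M,g)$ then makes $\theta$ parallel, so $\tilde\xi:=\tilde\theta^{\sharp_{\tilde g}}$ is a $\nabla^{\tilde g}$-parallel vector field on $\tilde M$ which, by the separability established above, is tangent to $TN$. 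Applying the conformal change formula from $\nabla^h$ to $\nabla^{\tilde g}$ to the pair $(X,\tilde\xi)$ with $X\in T\R^q$, all terms vanish by the product structure and the independence $\partial_{x_i}\varphi=0$ except one, giving $\nabla^{\tilde g}_X\tilde\xi=-|\tilde\xi|_{\tilde g}^2\,X$. Parallelism thus forces $|\tilde\xi|_{\tilde g}\equiv0$, so $d\varphi\equiv0$ and $\nabla$ is exact, contradicting the definition of an LCP structure. The main obstacle is the intermediate step, where the additive separation coming from mixed-direction equations must be combined with the equivariance under a non-isometric deck transformation in order to reduce $\varphi$ to a function of the non-flat factor only.
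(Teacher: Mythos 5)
Your proof follows the same skeleton as the paper's: the conformal Ricci formula evaluated on mixed directions gives the additive separation $e^\varphi=F+G$, equivariance under a strict homothety kills the flat part $F$, and then the flat--flat component together with the conformal change formula for the Laplacian yields $\Delta^{\tilde g}\varphi=\pm\lambda$, whence $\lambda=0$ by integration over $M$ and $\theta$ is $\nabla^g$-parallel by Bochner. Two steps deviate from the paper. In the endgame, instead of invoking Gallot's theorem (the paper cites \cite{gal} and \cite[Theorem 1.6]{ired}) to exclude a parallel Lee form, you apply \eqref{nbnbtl} to the pair $(X,\tilde\xi)$ with $X\in T\R^q$ and $\tilde\xi:=\tilde\theta^{\sharp_{\tilde g}}$, obtaining $\nabla^{\tilde g}_X\tilde\xi=-|\tilde\theta|^2_{\tilde g}X$, so parallelism forces $\theta\equiv 0$; this is correct, self-contained, and arguably cleaner than the citation.

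The genuine gap is in your argument that $F$ is constant when $q=1$. For $q\ge 2$ the computation is fine: the off-diagonal flat--flat equations make $F$ separate, and equating the diagonal entries $F_i''(x_i)=\alpha/(n-2)$ for two distinct indices forces $\alpha$ to be constant and $F$ to be a quadratic polynomial, after which equivariance and positivity of $u$ finish the job. But for $q=1$ there is a single equation $F''(x_1)=\alpha/(n-2)$, which only shows that $\alpha$ depends on $x_1$ alone; nothing forces $F$ to be quadratic, so there is no ``analogous'' analysis --- genuinely different input is needed. One repair is to take the trace of your tensor identity over $TN$: this gives $(n-1)\alpha'(x_1)=F'(x_1)\,\mathrm{Scal}^{g_N}$, so if $F$ is non-constant then $\mathrm{Scal}^{g_N}$ is a constant $s$ and $F''=\tfrac{s}{(n-1)(n-2)}F+\mathrm{const}$, an explicit linear ODE whose non-constant solutions (exponential, trigonometric, or quadratic) are then all excluded by the equivariance $F(\gamma_1 x)=\lambda_\gamma F(x)+c_\gamma$ with $\lambda_\gamma\ne 1$ together with positivity of $u=F+G$. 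A much shorter repair, and the one the paper uses, is to quote \cite[Lemma 3.4]{brice}: $F$ is bounded on $\R^q$, and a bounded function satisfying the above equivariance with $\lambda_\gamma\ne 1$ must be constant by iterating $\gamma_1$. With either repair your proof is complete.
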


\begin{proof}
Assume that
$([g],\nabla)$ is an LCP structure on an Einstein
manifold $(M,g)$, i.e. satisfying
\begin{equation}\label{einst}\mbox{Ric}^g=\lambda g\end{equation} 
for some real constant $\lambda$.

Since $\tilde g=e^{-2\varphi}h$, \eqref{CH} shows that the connections $\nabla^{\tilde g}$ and
$\nabla^h$ are related by:
\beq\label{nbnbtl}\nabla^h_XY-\nb^{\tilde g}_XY=d\varphi(Y)X+d\varphi(X)Y- \tilde g(X,Y)(d\varphi)^{\sharp_{\tilde g}},\eeq 
where $(d\varphi)^{\sharp_{\tilde g}}$ is the dual vector field to $d\varphi$ with respect to $\tilde g$.

Moreover, from \cite[Theorem 1.159]{besse} we have the following relations between the Laplace operators on functions and 
the Ricci tensors of the metrics $\tilde g$ and $h$:
\beq\label{laplace}\Delta^{\tilde g}f=e^{2\varphi}(\Delta^hf+(n-2)h(df,d\varphi)),\eeq
and
\beq\label{ric1}\mbox{Ric}^{\tilde g}-\mbox{Ric}^{h}=(n-2)(\nabla^{h}(d\varphi)+
d\varphi\otimes d\varphi)-(\Delta^{h}\varphi+(n-2)\|d\varphi\|_{h}^2)h,\eeq

Note that $\mathrm{Ric}^h(X,Y)=0$ for every $X\in T\mathbb{R}^q$ and $Y\in TN$.
Let us fix vector fields $X\in \mathcal{X}(\mathbb{R}^q)$ and $Y\in \mathcal{X}(N)$, identified with their canonical extensions to $\tilde M$. In particular, $\nabla^h_{X}Y=0.$ Plugging $X,Y$ into \eqref{ric1} and using \eqref{einst} yields
\beq\label{xy} \nabla^{h}(d\varphi)(X,Y)+
d\varphi(X)d\varphi(Y)=0.
\eeq
This implies
$$X(Y(e^\varphi))=X(e^\varphi(d\varphi(Y)))=e^\varphi(d\varphi(X)d\varphi(Y)+\nabla^{h}(d\varphi)(X,Y))=0.$$
Since this holds for every $X\in \mathcal{X}(\mathbb{R}^q)$ and $Y\in \mathcal{X}(N)$, we see that $e^\varphi=f_1+f_2$ for some smooth functions $f_1\in C^\infty(\mathbb{R}^q)$ and $f_2\in C^\infty(N)$. 

Consider an element $\gamma\in\pi_1(M)$ acting on $\tilde M$ as a strict homothety of $h$. Since $\tilde g$ is of course $\pi_1(M)$-invariant, this means that there exists a positive real number $\mu\ne 1$ such that $\gamma^*(e^\varphi)=\mu e^\varphi$.
We thus obtain 
$$\gamma^*f_1-\mu f_1=\mu f_2-\gamma^*f_2.$$
Since this is an equality between functions on $\R^q$ and $N$ respectively, there exists a constant $c$ such that $\gamma^*f_1-\mu f_1=c$. This equation can be written as 
$$\gamma^*(f_1-\frac c{1-\mu})=\mu (f_1-\frac c{1-\mu}).$$
On the other hand,  \cite[Lemma 3.4]{brice} shows that $f_1$ is bounded on $\R^q$. The above equivariance property thus shows that $f_1-\frac c{1-\mu}$ vanishes, i.e. $f_1$ is constant. 

We have thus proved that $\varphi$ is the pull-back to $\tilde M$ of a function defined on $N$. We now plug in a non-zero vector $X$ from $T\R^q$ in the Ricci transformation formula \eqref{ric1} and obtain:
$$\lambda \tilde g(X,X)=-(\Delta^{h}\varphi+(n-2)\|d\varphi\|_{h}^2)h(X,X),$$
whence 
$$\lambda=-e^{2\varphi}(\Delta^{h}\varphi+(n-2)\|d\varphi\|_{h}^2).$$

We conclude from \eqref{laplace} that 
$$-\lambda=\Delta^{\tilde g}\varphi.$$
Recall now that $d\varphi$ is the pull-back to $\tilde M$ of the Lee form $\theta$ on $M$. The previous relation thus reads $-\lambda=\delta^{g}\theta$ on $M$, which, by integration on the compact manifold $M$, yields that $\lambda=0$ and thus $\theta$ is $g$-harmonic. The Bochner formula applied to the compact Ricci-flat manifold $(M,g)$ then shows that $\theta$ is $\nabla^g$-parallel. However, this is impossible by \cite{gal} (see also \cite[Theorem 1.6]{ired}).
\end{proof}

\subsection{LCP structures on reducible manifolds}

We start by recalling the construction in \cite{brice} of LCP structures on compact manifolds carrying a reducible metric in their conformal class. Let $(c,\nabla)$ be an LCP structure on $M$.
Recall that a metric $g\in c$ is called {\it adapted} if the Lee form of $\nabla$ with respect to $g$ vanishes on the flat distribution \cite[Definition 3.8]{brice}. This is equivalent to the fact that the function $\varphi$ on $\tilde M$ defined by $h=e^{2\varphi}\tilde g$ is constant along $\R^q$, i.e. it is the pull-back of a function on $N$. By \cite[Proposition 3.6]{brice}, every LCP structure admits adapted metrics.

\begin{ex}\label{example}
Let $g'$ be an adapted metric for an LCP structure on $M'$, and let $(\tilde M',h')=\R^q\times (N',g_{N'})$ be the decomposition of the universal cover of $(\tilde M',h':=e^{2\varphi}\tilde g')$ given by Theorem \ref{kourg}. If  $(K,g_K)$ be any compact Riemannian manifold, then the  Riemannian product $(M,g):=(M',g')\times (K,g_K)$ also carries an LCP structure. Indeed, the lift of the Riemannian metric $g$ to the universal cover $\tilde M=\tilde M'\times\tilde K$ of $M$ can be written 
$$\tilde g=\tilde g'+\tilde g_K=e^{-2\varphi}h'+\tilde g_K=e^{-2\varphi}(g_{\R^q}+g_{N'}+e^{2\varphi}\tilde g_K)$$
so $(\tilde M,e^{2\varphi}\tilde g)$ is the Riemannian product of the flat space $\R^q$ and $(N,g_N):=(N'\times\tilde K,g_{N'}+e^{2\varphi}\tilde g_K)$ (the latter being a warped product metric on $N'\times \tilde K$ since $\varphi$ is a function on $N'$).

The universal cover $\tilde M$ of $M$ admits thus a Riemannian product metric $g=g_1+g_2$ -- where $g_1:=g'$ is a metric on $M_1:=\tilde M'$, and $g_2:=\tilde g_K$ on $M_2:=\tilde K$ -- which is $\pi_1(M)=\pi_1(M')\times \pi_1(K)$-invariant. The connection $\nabla^h$ induces an LCP structure on $M$, for which the flat distribution $T\R^q$ is contained in $TM_1$ and the function $\varphi:\tilde M=\R^q\times N'\times \tilde K$ determining the conformal change from $h$ to $\tilde g$ only depends on the factor $N'$.
\end{ex}

Our aim is to prove that conversely, every reducible Riemannian manifold carrying an LCP structure is obtained locally by the above construction, or, equivalently, that the properties described in the paragraph above are satisfied on every compact reducible LCP manifold.

\begin{ath} \label{tred} Assume that $(M,g)$ is a compact reducible Riemannian manifold (thus its universal cover $(\tilde M,\tilde g)$ is isometric to a Riemannian product $(M_1,g_1)\times (M_2,g_2)$ of two complete Riemannian manifolds).  If $([g],\nabla)$ is an LCP structure on $M$, defining the $\pi_1(M)$-equivariant product metric $e^{2\varphi}\tilde g=h:=g_{\R^q}+g_N$ on $\tilde M$,
  then up to interchanging the factors $M_1$ and $M_2$, the flat distribution $T\R^q$ is contained in $TM_1$, and the conformal factor $\varphi$ is the pull-back of a function on $M_1$ which is constant along $\R^q$.

Moreover, the universal cover $\tilde M$ decomposes as a triple product $\R^q\times N'\times M_2$, where $M_1=\R^q\times N'$, and $h=g_{\R^q}+g_{N'}+e^{-2\varphi}g_2$, with $(N',g_{N'})$ incomplete.
\end{ath}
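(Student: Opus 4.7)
The plan is to compare the two orthogonal parallel decompositions of $T\tilde M$: the splitting $TM_1\oplus TM_2$ coming from the de Rham decomposition of $\tilde g$, and the splitting $T\R^q\oplus TN$ coming from Kourganoff's theorem applied to $(\tilde M,h)$. Since $\R^q$ is flat and simply connected, $T\R^q$ admits a global basis of $\nabla^h$-parallel vector fields $V_1,\ldots,V_q$. For each such $V$, the identity $\nabla^h V=0$ rewritten via \eqref{nbnbtl} gives
\[
\nabla^{\tilde g}_Y V = -d\varphi(V)\,Y - d\varphi(Y)\,V + \tilde g(V,Y)(d\varphi)^{\sharp_{\tilde g}}.
\]
Decomposing $V=V^1+V^2$, $Y=Y^1+Y^2$, and $d\varphi=d_1\varphi+d_2\varphi$ along $TM_1\oplus TM_2$ and projecting this equation onto each factor using that $TM_i$ is $\nabla^{\tilde g}$-parallel, we obtain two diagonal equations of the form $\nabla^{g_i}_{Y^i}V^i = -V(\varphi)\,Y^i - Y^i(\varphi)\,V^i + g_i(V^i,Y^i)(d_i\varphi)^{\sharp_{g_i}}$ and two off-diagonal equations $Y^j(V^i) = -Y^j(\varphi)\,V^i + g_j(V^j,Y^j)(d_i\varphi)^{\sharp_{g_i}}$ for $i\neq j$. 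Symmetrizing the diagonal equations yields $\mathcal{L}_{V^i}g_i = -2V(\varphi)\,g_i$, so each $V^i$ is a (parameter-dependent) conformal Killing field on $(M_i,g_i)$ with common conformal factor $-V(\varphi)$.

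The main step, and central obstacle, is to deduce from this system that, up to swapping $M_1$ and $M_2$, one has $T\R^q\subset TM_1$ and $d_2\varphi\equiv 0$ globally. The strategy is to combine the conformal Killing structure above with the conformal change formula \eqref{ric1} for the Ricci tensor, applied to a mixed pair $X\in TM_1$, $Y\in TM_2$: since $\Ric^{\tilde g}(X,Y)=0$ by the product structure of $\tilde g$ and $\tilde g(X,Y)=h(X,Y)=0$, this produces a Hessian-type identity $\mathrm{Hess}^h\varphi(X,Y)+X(\varphi)Y(\varphi)=-\Ric^h(X,Y)/(n-2)$ in the spirit of the Einstein case of Theorem~\ref{te}. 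In contrast to that case, $\Ric^h(X,Y)$ here need not vanish (it reduces to $\Ric^{g_N}$ applied to the $TN$-components), so the key difficulty is controlling this extra Ricci term using the specific product structure of $h$. Combined with the off-diagonal ODE $Y^1(V^2)=-Y^1(\varphi)V^2+g_1(V^1,Y^1)(d_2\varphi)^\sharp$, the constant-norm constraint $|V^1|_{g_1}^2+|V^2|_{g_2}^2=c\,e^{-2\varphi}$ coming from $|V|_h^2\equiv c$, and the $\pi_1(M)$-equivariance $\gamma^*\varphi=\varphi+\rho(\gamma)$ with a strict homothety $\gamma_0\in\pi_1(M)$ forcing $\varphi$ to be unbounded, I expect to rule out a persistent non-trivial $V^2$-component, establishing $T\R^q\subset TM_1$; the identity $d_2\varphi=0$ then follows by re-examining the off-diagonal equations with $V^2=0$.

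With $T\R^q\subset TM_1$ and $d_2\varphi=0$ in hand, the off-diagonal equation $Y^2(V^1)=0$ (using $V^2=0$ and $d_2\varphi=0$) shows each $V_k^1$ is constant along $M_2$, hence descends to a vector field $\bar V_k$ on $M_1$. A direct check against the diagonal equation verifies that $\bar V_k$ is $\nabla^{h_1}$-parallel on $(M_1,h_1)$ with $h_1:=e^{2\varphi}g_1$. The commutativity $[V_i,V_j]=0$ inherited from the $\R^q$-translations of Kourganoff's decomposition, combined with $\nabla^{h_1}\bar V_k=0$, then forces $V_k(\varphi)=0$, so that $\varphi$ is constant along the flat distribution $T\R^q$ and the metric $g$ is adapted.

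Finally, the $q$ linearly independent $\nabla^{h_1}$-parallel vector fields on the simply connected $(M_1,h_1)$ span a $\nabla^{h_1}$-parallel flat distribution. The de Rham decomposition theorem applied to $(M_1,h_1)$--with completeness of the flat leaves provided by the global product structure $h=g_{\R^q}+g_N$ on $\tilde M$--yields a splitting $(M_1,h_1)\cong(\R^q,g_{\R^q})\times(N',g_{N'})$. The incompleteness of $(N',g_{N'})$ follows because $(N,g_N)\cong(N'\times M_2,\,g_{N'}+e^{2\varphi}g_2)$ is incomplete by Kourganoff, while $(M_2,g_2)$ is complete. This establishes the triple product decomposition $\tilde M=\R^q\times N'\times M_2$ with $M_1=\R^q\times N'$ and $h$ of the claimed form.
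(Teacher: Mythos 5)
Your proposal correctly sets up the two parallel splittings, derives the diagonal/off-diagonal equations from $\nabla^h V=0$, and identifies the right final step (the de Rham splitting of $(M_1,h_1)$ and the incompleteness of $N'$). But the heart of the theorem --- ruling out a $\nabla^h$-parallel field $\xi\in T\R^q$ whose components in $TM_1$ and $TM_2$ are both nonzero on some open set --- is not actually proved: you explicitly write that you ``expect to rule out a persistent non-trivial $V^2$-component'' by combining the Ricci conformal-change formula, the constant-norm constraint and equivariance, but no argument closing this is given, and the Ricci route you sketch runs into exactly the obstruction you name yourself ($\mathrm{Ric}^h(X,Y)$ does not vanish for mixed pairs relative to $TM_1\oplus TM_2$). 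This is a genuine gap, since everything after it depends on $T\R^q\subset TM_1$.

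For comparison, the paper closes this step with two inputs you do not use. First, it observes that $\xi$ is a \emph{complete} conformal Killing field of the complete non-flat product $(\tilde M,\tilde g)$ and invokes the Tashiro--Miyashita theorem to conclude that $\xi$ is Killing, hence $\xi(\varphi)=\tilde\theta(\xi)=0$ from the outset (you only obtain $V_k(\varphi)=0$ at the end, after assuming the splitting). Second, it studies the set $W_\xi=\{\xi_1\neq0,\ \xi_2\neq0\}$: a delicate ODE analysis along the integral curves of $\xi$ (distinguishing whether $\mathrm{span}(\xi,\tilde\theta^\sharp)$ meets $TM_1$ or $TM_2$) shows $W_\xi$ is empty or dense; then, writing $\eta=e^\varphi\xi$ so that $\nabla^{\tilde g}_X\eta=\tilde\theta^\sharp\,\tilde g(X,\eta)$, the vanishing of the mixed curvature $R^{\tilde g}_{X_1,X_2}$ of the product metric forces $d\|e^{-\varphi}\tilde\theta\|_{\tilde g}^2=0$, i.e.\ $e^{-2\varphi}\|\tilde\theta\|_{\tilde g}^2$ constant, contradicting the boundedness of $\|\theta\|_g$ on the compact $M$ together with the unboundedness of $\varphi$. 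Some such global mechanism (a curvature identity with vanishing mixed terms plus a compactness/unboundedness contradiction) is what your sketch is missing; the ingredients you list are necessary but you have not assembled them into a proof.
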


\begin{proof} 
Let $\xi$ be a non-zero $\nabla^h$-parallel vector field tangent to the flat factor $\R^q$ of $\tilde M$. One can assume that $h(\xi,\xi)=1$. Since $\nabla^h\xi=0$, $\xi$ is Killing with respect to $h$ and conformal Killing with respect to the complete Riemannian product metric $\tilde g$, which is non-flat since we excluded the Einstein case in Theorem \ref{te}. Moreover, $\xi$ is complete, therefore we can apply the following remarkable result:

  {\bf Theorem (Tashiro-Miyashita)} \cite{TM67} {\it A complete conformal Killing vector field on a complete, non-flat Riemannian product is Killing.} 

We infer that  $\xi$ is Killing with respect to $\tilde g$, which implies that $\xi(\varphi)=0$. Thus $\tilde\theta(\xi)=0$, for all $h$-parallel vector fields $\xi$ tangent to the factor $\R^q$, showing that $\tilde\theta(T\R^q)=0$, or equivalently that $g$ is adapted. We will use this fact several times in the subsequent proofs without referring to it explicitly.

We consider the open set 
$$W_\xi:=\{x\in M\ |\ \xi_1\ne 0,\  \xi_2\ne 0\},$$
where for every tangent vector $X\in T\tilde M$ we will denote by $X_1$ and $X_2$ its projections to $TM_1$ and $TM_2$.

Our goal is to show that $W_\xi=\emptyset$. This will be done in two steps: first we prove that if $W_\xi$ is non-empty, then it is dense in $\tilde M$. Next, assuming $W_\xi$ is dense, we will obtain a contradiction.

\begin{lem}\label{ld}
For every $\nabla^h$-parallel vector field $\xi$, the set $W_\xi$ is either empty or dense in $\tilde M$. 
\end{lem}

\begin{proof}
The statement is trivial for $\xi=0$, so we can assume that $h(\xi,\xi)=1$. For every $x \in \tilde M$ we denote by $\tilde M_x^\perp$ the maximal integral manifold induced by the distribution $\xi^\perp$. Since $(\tilde M,h)$ is isometric to $\R^q \times (N, g_N)$ and $\xi$ is a constant vector on $\R^q$, one has $M_x^\perp \simeq \R^{q-1} \times N$.

Suppose that $W_\xi \neq \emptyset$ and fix $x \in W_\xi$. Applying \eqref{CH} to a vector field $X \in T \tilde M_x^\perp$ yields
\begin{equation}\label{nbxi}
0 = \nabla^{\tilde g}_X \xi + \tilde \theta (X) \xi = \nabla^{\tilde g}_X (e^\varphi \xi)
\end{equation}
showing that the distribution generated by $\xi$ is parallel along $\tilde M_x^\perp$. Let $y \in \tilde M_x^\perp$. One has $\xi_y \notin T M_1$ because otherwise $\xi_x$ would be in $T M_1$ since $T M_1$ is $\tilde g$-parallel. With the same argument, one has $\xi_y \notin T M_2$, and we conclude that $y \in W_\xi$ and thus $\tilde M_x^\perp \subset W_\xi$.

It remains to understand how the decomposition of $\xi$ with respect to $T M_1$ and $T M_2$ varies in the direction of $\xi$. Let $D_x$ be the maximal integral manifold through $x$ of the distribution spanned by $\xi$. We know that $D_x \simeq \R$ and $\varphi$ is constant along $D_x$ since $d \varphi (\xi) = \tilde \theta(\xi) = 0$. This implies that $\nabla^h_\xi \tilde \theta = 0$.
We remark that the metric duals $\tilde \theta^{\sharp}$ and $\tilde \theta^{\sharp_h}$ of $\theta$ with respect to $\tilde g$ and $h$ are related by $\tilde \theta^{\sharp} = e^{2 \varphi} \tilde \theta^{\sharp_h}$, showing that $\nabla^h_\xi \tilde\theta^\sharp = \nabla^h_\xi (e^{-2 \varphi} \tilde \theta^{\sharp_h}) = e^{-2 \varphi} \nabla^h_\xi \tilde \theta^{\sharp_h} = 0$.
Applying \eqref{CH} again, we obtain
\begin{align*}
0 &= \nabla^h_\xi \xi = \nabla^{\tilde g}_\xi \xi - \tilde \theta^\sharp \tilde g (\xi, \xi) = \nabla^{\tilde g}_\xi \xi - e^{-2 \varphi} \tilde \theta^\sharp \\
0 &= \nabla^h_\xi \tilde \theta^\sharp = \nabla^{\tilde g}_\xi \tilde \theta^\sharp + \tilde \theta (\tilde \theta^\sharp)\xi = \nabla^{\tilde g}_\xi \tilde \theta^\sharp + \vert \tilde \theta \vert^2_g \xi,
\end{align*}
which can be rewritten
\begin{equation} \label{eqxitheta}
\begin{aligned}
&\nabla^{\tilde g}_\xi \xi = e^{-2 \varphi} \tilde \theta^\sharp \\
&\nabla^{\tilde g}_\xi \tilde \theta^\sharp = - \vert \tilde \theta \vert^2_g \xi.
\end{aligned}
\end{equation}
Taking the $\tilde g$-scalar product with $\tilde \theta^\sharp$ in the second equation of \eqref{eqxitheta} shows that $\tilde\theta^\sharp$ has constant norm along $D_x$, so the subspace $E:=\mathrm{span}(\xi,\theta^\sharp)$ defines a $\ng$-parallel distribution along $D_x$. We distinguish two  cases.

{\bf Case 1.} Assume that $E_x \cap T_x M_1 = \lbrace 0 \rbrace = E_x \cap T_x M_2$. Since $E\cap TM_1$ and $E\cap TM_2$ are both $\ng$-parallel along $D_x$, this implies that $E_y \cap T_y M_1 = \{ 0 \}= E_y \cap T_y M_2$ for every $y\in D_x$. In particular, $\xi$ is contained neither in $T M_1$ nor in $T M_2$ along $D_x$, so $D_x \subset W_\xi$.

{\bf Case 2.} Assume now that $E_x \cap T_x M_1 \neq \lbrace 0 \rbrace$. The case $E_x \cap T_x M_2 \neq \lbrace 0 \rbrace$ will be treated similarly. Since $x\in W_\xi$, one has $\xi_x\notin T_xM_1$, so $\tilde \theta_x^\sharp \neq 0$ and the dimension of $E$ is $2$ along $D_x$. Moreover, $E_x\cap T_xM_1$ has dimension 1, so the distribution $E \cap T M_1$ has dimension $1$ along $D_x$. This allows us to define a $\ng$-parallel vector field $X \in E \cap T M_1$ along $D_x$ satisfying $\tilde g (X, X) = 1$. 
Consider the $\ng$-parallel vector field $Y$ along $D_x$ (uniquely defined up to a sign) which is orthogonal to $X$, belongs to $E$, and satisfies $\tilde g(Y, Y) = 1$.
We take a scalar product with respect to $X$ and $Y$ in equation~\eqref{eqxitheta} and obtain
\begin{align*}
&\xi (\tilde g(\xi, X)) = e^{-2 \varphi} \tilde g(\tilde \theta^\sharp, X) && \xi (\tilde g(\tilde \theta^\sharp, X)) = - \vert \tilde \theta \vert^2_{\tilde g} \tilde g(\xi, X) \\
&\xi (\tilde g(\xi, Y)) = e^{-2 \varphi} \tilde g(\tilde \theta^\sharp, Y) && \xi (\tilde g(\tilde \theta^\sharp, Y)) = - \vert \tilde \theta \vert^2_{\tilde g} \tilde g(\xi, Y).
\end{align*}
Defining $\xi_X := \tilde g(\xi, X)$, $\xi_Y := \tilde g(\xi, Y)$, $c: = e^{-\varphi} \vert \tilde \theta \vert_{\tilde g}$ and taking a further derivative with respect to $\xi$ in the first and third equations above leads to
\begin{align*}
\xi^2 (\xi_X) = - c^2 \xi_X, &&
\xi^2 (\xi_Y) = - c^2 \xi_Y.
\end{align*}
We conclude that there exist $c_1, c_2, c_3, c_4 \in \R$ such that
\begin{align}
\xi_X (t) = c_1 \cos(c t) + c_2 \sin( c t) &&
\xi_Y(t)= c_3 \cos(c t) + c_4 \sin( c t),
\end{align}
where we parameterized $D_x$ via $t \mapsto \exp_x (t \xi)$.
Due to the initial conditions, $(c_1, c_2) \neq (0, 0)$ and $(c_3, c_4) \neq (0, 0)$, so $\xi_X$ and $\xi_Y$ are analytic non-zero functions of $t$. We now write the decomposition of $X$, $Y$ and $\xi$ according to $T M_1 \oplus T M_2$ as $X=:X_1$, $Y=:Y_1 + Y_2$ and $\xi=:\xi_1 + \xi_2$ respectively, and we obtain along $D_x$:
\begin{align*}
\xi_1 = \xi_X X_1 + \xi_Y Y_1, && \xi_2 = \xi_Y Y_2.
\end{align*}
One has $\tilde g (X_1, Y_1) = \tilde g (X_1, Y)=\tilde g (X, Y) = 0$, so for any $y \in D_x$
\begin{align*}
\xi_y \in T_y M_2 \Leftrightarrow (\xi_X X_1)_y = 0 \textrm{ and } (\xi_Y Y_1)_y = 0, && \xi_y \in T_y M_1 \Leftrightarrow (\xi_Y Y_2)_y  = 0.
\end{align*}
Note that $X=X_1$ is non-vanishing along $D_x$, and $Y_2$ is also non-vanishing since otherwise $Y$ would belong to $E\cap TM_1$ which has dimension $1$ and is spanned by $X$. Thus, a necessary condition for $\xi_y$ to belong to $T_y M_2$ is that $\xi_X(y) = 0$ and a necessary condition for $\xi_y$ to belong to $T_y M_1$ is that $\xi_Y(y) = 0$. However, since the functions $\xi_X$ and $\xi_Y$ are analytic and non-zero, these two conditions occur only on a discrete subset of $D_x$. Since in Case 1 we have $D_x\subset W_\xi$, this argument shows that $U_x:=D_x \cap W_\xi$ is dense in $D_x$ in both cases. 

Now, using the fact that $\tilde M_y^\perp \subset W_\xi$ for every $y\in W_\xi$, we conclude that $W_\xi=U_x\times \tilde M_x^\perp$ which is dense in $\tilde M$, thus proving the lemma.
\end{proof}

We will now prove that $W_\xi$ is actually empty for every $\xi$.

\begin{lem}\label{le}
    For every $\nabla^h$-parallel vector field $\xi$, the set $W_\xi$ is empty. 
\end{lem}

\begin{proof} Like before, one can assume that $h(\xi,\xi)=1$. By \eqref{CH} we then obtain $0=\ng_X\xi+\tilde\theta(X)\xi-\tilde\theta^\sharp \tilde g(X,\xi)$
for every $X\in T\tilde M$. Denoting by $\eta:=e^{\varphi}\xi$, this equation reads
\begin{equation} \label{b0}\ng_X\eta=\tilde\theta^\sharp \tilde g(X,\eta)\qquad\forall X\in T\tilde M.\end{equation}
By taking a further covariant derivative in this relation and skew-symmetrizing, we obtain 
$$R^{\tilde g}_{X,Y}\eta=\tilde g(\eta,Y)(\ng_X\tilde\theta^\sharp-\tilde\theta(X)\tilde\theta^\sharp)-\tilde g(\eta,X)(\ng_Y\tilde\theta^\sharp-\tilde\theta(Y)\tilde\theta^\sharp)\qquad\forall X,Y\in T\tilde M.$$
For $X:=X_1\in TM_1$ and $Y:=X_2\in TM_2$ and using the notation $\alpha:=-e^{-\varphi}\tilde\theta$, the above relation becomes
\begin{equation}\label{eta}
    0=\tilde g(\eta,X_2)\ng_{X_1}\alpha-\tilde g(\eta,X_1)\ng_{X_2}\alpha\qquad\forall X_1\in TM_1,\ \forall X_2\in TM_2.
\end{equation}

Let us assume $W_\xi$ is non-empty, thus dense in $\tilde M$ by Lemma \ref{ld}. The relation \eqref{eta} implies that $\nabla^{\tilde g}_X\alpha=0$ at each point of $W_\xi$, for all $X$ orthogonal to the 2-plane $P$ spanned by $\eta_1$ and $\eta_2$, the components of $\eta$ in $TM_1$ and $TM_2$ which are both non-zero. In particular
  \begin{equation}\label{normalpha}
   d\|\alpha\|_{\tilde g}^2(X)=0\qquad \forall X\perp P,
 \end{equation}
 which implies that $d\|\alpha\|_{\tilde g}^2$ belongs to $P^*$ the dual 2-plane to $P$. Take now in \eqref{eta} the scalar product with $2\alpha$. We obtain
\begin{equation}\label{etawda}(\eta^\flat\wedge d\|\alpha\|_{\tilde g}^2)(X_1,X_2)=0,\ \ \forall X_i\in TM_i,\ i=1,2.\end{equation}
    But the 2-form $\eta^\flat\wedge d\|\alpha\|_{\tilde g}^2$ is a decomposable form whose factors are both in the dual 2-plane $P^*$, and, considering the basis of $P$ defined by $X_i:=\eta_i$, $i=1,2$, the above relation shows that $\eta^\flat\wedge d\|\alpha\|_{\tilde g}^2=0$ on $W_\xi$, thus everywhere on $\tilde M$. But the factor $\eta^\flat$ is unitary and, as $\alpha$ only depends on $\varphi$ which is constant on $\R^q$, $d\|\alpha\|_{\tilde g}^2(\eta)=0$, thus $d\|\alpha\|_{\tilde g}^2\perp \eta^\flat$.

    We thus obtain $d\|\alpha\|_{\tilde g}^2=0$ on $\tilde M$, so $\|\alpha\|_{\tilde g}^2=e^{-2\varphi}\|\tilde\theta\|$ is constant on $\tilde M$. But $\|\tilde\theta\|_{\tilde g}^2=\|\theta\|_{g}^2$ is bounded, whereas $\varphi$ is unbounded on $\tilde M$. This contradiction shows that $W_\xi = \emptyset$.
\end{proof}

\begin{lem}
    The distribution $T\R^q$ is either contained in $TM_1$ or in $TM_2$.
\end{lem}
\begin{proof}
Let $x$ be any point in $\tilde M$. By Lemma \ref{le}, $T_x\R^q\subset T_xM_1\cup T_xM_2$. This clearly implies that $T_x\R^q\subset T_xM_1$ or $T_x\R^q\subset T_xM_2$. For $i=1,2$, the sets
$$C_i:=\{x\in \tilde M,\ T_x\R^q\subset T_xM_i\}$$ 
are closed and disjoint. Since $\tilde M$ is connected, one of them is equal to $\tilde M$, thus proving the lemma.
\end{proof}

We can now finish the proof of Theorem \ref{tred}.

Up to exchanging $M_1$ and $M_2$ we can assume that $T_x\R^q$ is contained in $T_xM_1$ for every $x\in \tilde M$. 
Using \eqref{CH} for some non-zero $\nabla^h$-parallel section $\xi$ of $T\R^q$, we get for every $X\in T\tilde M$ 
\begin{equation} \label{b}0=\ng_X\xi+\tilde\theta(\xi)X+\tilde\theta(X)\xi-\tilde\theta^\sharp \tilde g(X,\xi).\end{equation}
The vector field $\xi$ and  is tangent to $TM_1$ and the same holds for $\ng_X\xi$ since $TM_1$ is $\ng$-parallel.
Therefore, projecting the equation above on $TM_2$ and taking $X=\xi$ yields $\tilde\theta_2=0$, so
\begin{equation}\label{theta2=0}X_2(\varphi)=0 \mbox{ for all }X_2\in TM_2.\end{equation}

The action of the Lie group $\R^q$ on $\tilde M=M_1\times M_2$ defined by the $\nabla^h$-parallel vector fields $\xi$ from the Riemannian factor $\R^q$ is free, proper, isometric with respect to $h$ and $\tilde g$, and preserves all slices $M_1\times\{x_2\}$ , $x_2\in M_2$. Fix a point $y_2\in M_2$ and denote by $N'$ the quotient space $(M_1\times \{y_2\})/ \R^q$ with respect to the above free proper action of $\R^q$. The projection $p:M_1\times \{y_2\}\ra N'$ is an $\R^q$ principal bundle with an $\R^q$-invariant horiontal (integrable) distribution $D$. As $M_1$ is simply connected, $N'$ is as well, thus the flat $\R^q$-connection $D$ in the above principal bundle has no monodromy and is therefore trivial, which makes every leaf of the foliation tangent to $D$ inside $M_1\times \{y_2\}$ diffeomorphic to the base $N'$. In fact they are also isometric when considered the metric induced by $h$, thus $(M_1\times\{y_2\},h \vert_{M_1\times\{y_2\}})$ is isometric to the Riemannian product $(\R^q,g_{\R^q})\times (N',g_{N'})$.
  
In order to prove that $\tilde M=M_1\times M_2$ is a triple product, we need to show that the decomposition $M_1\times \{y_2\}=\R^q\times N'$ is independent of the choice of $y_2\in M_2$. This in turn holds if and only if the distributions $\R^q$ and $D$ are invariant by the infinitesimal action of any vertical vector field $\bar X_2\in\mathcal{X}(\tilde M)$ with respect to the projection $p_2:\tilde M\ra M_2$, i.e. for example for $\bar X_2$ induced by a vector field $X_2\in\mathcal{X}(M_2)$. In fact $[X_1,\bar X_2]$ is a section of $TN$ (as both $X_1$ and $\bar X_2$ are) for any vector field tangent to $D\subset TN$. That makes $D\subset TM_1$ automatically stable along $\bar X_2$.

  Consider now $\xi$ a $\nabla^h$-parallel vector field tangent to $\R^q$ and compute
  $$[\xi,\bar X_2]=\nabla^{\tilde g}_{\xi}\bar X_2-\nabla^{\tilde g}_{\bar X_2}\xi.$$

The first term belongs to $TM_2$ (in fact it is zero for our choice of $\bar X_2$ being a lift of $X_2\in\mathcal{X}(M_2)$) and the second vanishes from \eqref{nbxi} since $\bar X_2\perp \xi$ and $\theta(\bar X_2)=0$ from \eqref{theta2=0}. This implies that the quotient space $M_1$ of $\tilde M$ inherits the product structure defined by the pair of integrable distributions $\R^q$ and $D$ on each slice $M_1\times\{y_2\}$, therefore $\tilde M$ is diffeomorphic to the triple product $\R^q\times N'\times M_2$. From a metric viewpoint, we have $h=g_{\R^q}+g_{N'}+e^{-2\varphi}g_2$ and $\tilde g=e^{2\varphi}(g_{\R^q}+g_{N'})+g_2$ as claimed.

In particular, $(N,g_N)$ is isometric to $(N' \times M_2, g_{N'} + e^{-2 \varphi} g_2)$, so $(N', g_{N'})$ is not complete because otherwise $(N, g_N)$ would be complete as a warped product of two complete Riemannian manifolds.
\end{proof}

Although the universal cover of $M$ turns out to be a triple product as in Example \ref{example} of a Riemannian product of an LCP manifold $M'$ with a compact manifold $K$, the next example shows that the fundamental group of $M$ is not necessarily a product of two groups acting separately on the factors $M_1$ and $M_2$, so the reducible metric $g$ on the LCP manifold $M$ is not globally a product in general.

\begin{ex}
Let $(a,b)$ be the canonical coordinate system of $\R^2$. We consider the transformation of $\R^2$ given by the matrix
\[
A = \left(
\begin{matrix}
1 & 1 \\
1 & 2
\end{matrix}
\right)
\]
and we chose a basis of eigenvectors of $A$ with associated eigenvalues $(\lambda, \lambda^{-1})$, inducing a coordinate system $(x,y)$ in $\R^2$. We define $\tilde M = \R^2 \times \R^*_+ \times \R$, and we endow this manifold with the metric
\begin{equation}
h := d x^2 + t^4 d y^2 + dt^2 + t^2 d s^2,
\end{equation}
written in the coordinate system $(x,y,t,s)$.
We now define the group of transformations $G$ generated by the maps
\begin{align*}
& (a,b,t,s) \mapsto (a+1,b,t,s) \\
& (a,b,t,s) \mapsto (a,b+1,t,s) \\
& (a,b,t,s) \mapsto (a,b,t,s + \sqrt{2}) \\
& (a,b,t,s) \mapsto (A (a,b)^T, \lambda t,s+1),
\end{align*}
written in the coordinate system $(a,b,t,s)$. It is easy to check that $G$ acts freely, properly and co-compactly on $\tilde M$ by homotheties, and the last map is an homothety of ratio $\lambda \neq 1$, so it is a strict homothety. Thus the metric $h$ defines an LCP structure on $M := \tilde M/G$ (see \cite[Remark 2.6]{brice} for more details). In addition, the metric
\begin{equation}
\tilde g := t^{-2} h = t^{-2} d x^2 + t^2 d y^2 + t^{-2} dt^2 + d s^2
\end{equation}
descends to a reducible metric $g$ on $M$. With the notations of this section, we can write
\begin{align}
(M_1,g_1) = (\R^2 \times \R^*_+, t^{-2} d x^2 + t^2 d y^2 + t^{-2} dt^2), && (M_2,g_2) = (\R ds^2),
\end{align}
but the group $G = \pi_1(M)$ is not a product of two groups acting separately on $M_1$ and $M_2$.
\end{ex}

The above example shows that our results do not answer completely Problem \ref{problem}, since we cannot describe the structure of the fundamental groups of the solutions. However, we do have a complete classification at the level of the universal covers. To make this precise, note that there is a one-to-one correspondence between the solutions $(M,g,\nabla)$ of Problem \ref{problem}, and tuples $(\tilde M,\tilde g,\varphi,\Gamma)$, where:
\begin{enumerate}
    \item $(\tilde M,\tilde g)$ is a complete simply connected Riemannian manifold of dimension $n\ge 3$ with special holonomy;
    \item $\varphi$ is a smooth function on $\tilde M$ such that the metric $h:=e^{2\varphi}\tilde g$ has special holonomy;
    \item $\Gamma$ is a discrete co-compact group acting on $\tilde M$ by isometries of $\tilde g$ and homotheties of $h$, not all of them being isometries.
\end{enumerate}

Summarizing the results in Proposition \ref{prop2}, Theorem \ref{prop3}, Theorem \ref{te} and Theorem \ref{tred}, we obtain the following classification result:

\begin{ath}\label{cl}
    The triples $(\tilde M,\tilde g,\varphi)$ satisfying conditions $(1)-(2)$ above, for which there exists a group $\Gamma$ satisfying condition $(3)$, are of the following form:
    \begin{itemize}
        \item $\tilde M=\R\times S$, $\tilde g=dt^2+g_S$, and $\varphi=dt$, where $(S,g_S)$ is a either a complete Sasakian manifold, a round sphere, or a compact nearly Kähler or nearly parallel $\G_2$ manifold.
        \item $\tilde M=\R^q\times N'\times M_2$, $\tilde g=e^{-2\varphi}(g_{\R^q}+g_{N'})+g_2$, with $q\ge 1$,  $\varphi\in C^\infty(N')$, where $(M_2,g_2)$ is a complete Riemannian manifold and $(N',g_{N'})$ is incomplete.
   \end{itemize}
 \end{ath}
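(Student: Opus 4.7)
The plan is to unpack the tuple $(\tilde M, \tilde g, \varphi, \Gamma)$ into the language used in the previous sections and then collect the relevant classifications. I would set $M := \tilde M/\Gamma$, so that $g$ is induced by $\tilde g$ on $M$, the metric $h = e^{2\varphi}\tilde g$ is $\Gamma$-conformal with $\nabla^h$ descending to a closed Weyl connection $\nabla$ on $(M,[g])$, and $\nabla$ is non-exact precisely because condition (3) provides at least one strict homothety. Conditions (1) and (2) then say that both $\nabla^g$ and $\nabla$ have special restricted holonomy, so $(M,g,\nabla)$ is a solution of Problem \ref{problem}, and the task reduces to assembling the earlier theorems case by case according to the holonomy type of $\nabla$.

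If $\nabla$ has irreducible holonomy, Proposition \ref{prop2} identifies $(M,g)$ as either a Vaisman manifold or a mapping torus of an isometry of a compact nearly Kähler or nearly parallel $\G_2$ manifold. In the Vaisman case, the Lee form of $\nabla$ with respect to $g$ is $\nabla^g$-parallel, so $d\varphi$ is a nonzero $\nabla^{\tilde g}$-parallel $1$-form, and the de Rham theorem splits $(\tilde M,\tilde g)$ as $\R \times S$ with $\tilde g = dt^2 + g_S$; after an affine normalization one has $\varphi = t$, and the standard Kähler cone characterization identifies $(S,g_S)$ as Sasakian. In the mapping torus case, the analysis at the end of Case (iii) of Section 3 already gives $\tilde M = \R \times N$ with $\tilde g = dt^2 + g_N$ and $\varphi = t$. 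In the flat case, Fried's theorem writes $(\tilde M,h)$ as $\R^n \setminus \{0\}$ with the Euclidean metric, and passing to logarithmic polar coordinates $t := \ln r$ rewrites $h = e^{2t}(dt^2 + g_{S^{n-1}})$, so $\tilde g = dt^2 + g_{S^{n-1}}$ with $\varphi = t$ and $S = S^{n-1}$ with its round metric. These three subcases exhaust the first bullet.

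Finally, if $\nabla$ has reducible non-zero holonomy, $(c,\nabla)$ is an LCP structure, and Theorems \ref{prop3} and \ref{te} respectively rule out $g$ being Kähler or Einstein. The Berger--Simons classification then forces $g$ to be reducible, and Theorem \ref{tred} produces the triple product decomposition $(\tilde M,\tilde g) = \R^q \times N' \times M_2$ with $\tilde g = e^{-2\varphi}(g_{\R^q} + g_{N'}) + g_2$, $\varphi \in C^\infty(N')$, and $(N',g_{N'})$ incomplete, which is the second bullet. Since the substantive content sits in the earlier theorems, I expect no real obstacle here; the proof is essentially bookkeeping. The only non-routine verification is identifying $(S,g_S)$ as Sasakian in the Vaisman subcase, which follows from the well-known equivalence between Vaisman manifolds and Kähler cones over Sasakian manifolds.
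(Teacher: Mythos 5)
Your proposal is correct and follows essentially the same route as the paper, which proves Theorem \ref{cl} simply by summarizing Proposition \ref{prop2}, Theorems \ref{prop3}, \ref{te} and \ref{tred} together with Fried's classification of the flat case. The only extra content you supply (the Vaisman-to-Sasakian cone identification and the logarithmic polar coordinates in the flat case) is standard and consistent with what the paper implicitly relies on.
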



\begin{thebibliography}{22} {

\bibitem{AdBM} {\sc A. Andrada, V. del Barco, A. Moroianu}, {\sl Locally conformally product structures on solvmanifolds}. Ann. Mat. Pura Appl. {\bf 203}, 2425--2456 (2024).

\bibitem{baer93a} {\sc C.~B{\"a}r}, {\sl Real {K}illing spinors and
holonomy}, Commun.\ Math.\ Phys.\ \textbf{154}, 509--521 (1993).

\bibitem{f} {\sc F.\ Belgun}, {\sl On the structure of non-K\"ahler
complex surfaces}, Math.\ Ann.\ {\bf 317}, 1--40 (2000). 

\bibitem{ired} {\sc F. Belgun, A. Moroianu}, {\sl On the irreducibility of locally metric connections}, J. reine angew. Math. {\bf 714}, 123--150 (2016).

\bibitem{af} {\sc F.\ Belgun, A.\ Moroianu}, {\sl Weyl-parallel forms,
conformal products and Einstein-Weyl manifolds}, Asian J.\ Math.\ {\bf 15}, 499--520 (2011).

\bibitem{besse} {\sc A.\ Besse,} {\it Einstein manifolds}, Ergebnisse
der Mathematik und ihrer Grenzgebiete (3)  10.\ Springer-Verlag,
Berlin, 1987.

\bibitem{dBM} {\sc V. del Barco, A. Moroianu}, {\sl The structure of locally conformally product Lie algebras}. arxiv:2404.17956 (2024).

\bibitem{brice} {\sc B.\ Flamencourt}, {\sl Locally conformally product structures}, Internat. J. Math. {\bf 35} (5), article 2450013 (2024).

\bibitem{FM} {\sc B.\ Flamencourt, A.\ Moroianu}, {\sl Conformal vector fields on LCP manifolds }. arxiv:2412.17060 (2024).

\bibitem{FZ} {\sc B.\ Flamencourt, A.\ Zeghib}, {\sl On foliations admitting a transverse similarity structure}. arxiv:2501.04814 (2025).

\bibitem{Frie}
{\sc D. Fried}, {\sl Closed similarity manifolds}. Comment. Math. Helv. {\bf 55} (4), 576--582 (1980).

\bibitem{fkms} {\sc Th.\ Friedrich, I.\ Kath, A.\ Moroianu, U.\
Semmelmann}, {\sl On nearly-parallel $\mathrm{G}_2$-structures,} J.\
Geom.\ Phys.\ {\bf 23}, 269--286 (1997).

\bibitem{gal} {\sc S.\ Gallot}, {\sl {\'E}quations diff{\'e}rentielles
caract{\'e}ristiques de la sph{\`e}re, } Ann.\ Sci.\ Ec.\ Norm.\ Sup.\
Paris {\bf 12}, 235--267 (1979). 

\bibitem{g1} {\sc P.\ Gauduchon}, {\sl La 1-forme de torsion d'une
vari\'et\'e hermitienne compacte}, Math.\ Ann.\ {\bf 267},
495--518 (1984).

\bibitem{g} {\sc P.\ Gauduchon}, {\sl Structures de Weyl-Einstein,
espaces de twisteurs et vari\'et\'es de type $S^1\times S^3$}, J.\
reine angew.\ Math.\  {\bf 469}, 1--50 (1995).  

\bibitem{gray} \textsc{A.\ Gray}, {\sl The structure of nearly
K\"ahler manifolds}, Math.\ Ann.\ {\bf 223}, 233--248 (1976).

\bibitem{kn1} {\sc S.\ Kobayashi, K.\ Nomizu}, {\it Foundations of
Differential  Geometry I}, New York, Interscience Publishers, 1963.

\bibitem{kou} {\sc M. Kourganoff}, {\sl Similarity structures and de Rham
decomposition}. Math. Ann. {\bf 373}, 1075--1101 (2019).

\bibitem{kr} {\sc W. K\"uhnel, H.-B. Rademacher}, {\sl Conformally Einstein
product spaces}, Differential Geom. Appl. {\bf 49}, 65--96 (2016).

\bibitem{lebrun} {\sc C.\ R.\ LeBrun}, {\sl Anti-self-dual Hermitian
metrics  on blown-up Hopf surfaces}, Math.\ Ann.\ {\bf 389},
383--392 (1991).
  
\bibitem {fam} {\sc F. Madani, A. Moroianu, M. Pilca}, {\sl Conformally related K\"ahler metrics and the holonomy of lcK manifolds}. J. Eur. Math. Soc. {\bf 22} (1), 119--149 (2020).

\bibitem{MN}
{\sc V.S. Matveev, Y. Nikolayevsky}, {\sl Locally conformally Berwald manifolds and compact quotients of reducible manifolds by homotheties}. Annales de l'Institut Fourier {\bf 67} (2), 843--862 (2017). 

\bibitem{schwach} {\sc S.\ Merkulov, L.\ Schwachh\"ofer}, {\sl
Classification of irreducible holonomies of torsion-free affine
connections}, Ann.\ of Math.\ {\bf 150}, 77--149 (1999).
  
\bibitem{m2016} {\sc A. Moroianu}, {\sl Conformally related metrics with
non-generic holonomy}. J. reine angew. Math. {\bf 755}, 279--292 (2019).

\bibitem{kg} {\sc A.~Moroianu}, {\it Lectures on {K}\"ahler geometry},
London Mathematical Society Student Texts, vol.~69, Cambridge
University Press, Cambridge, 2007.

\bibitem{al} {\sc A.\ Moroianu, L.\ Ornea}, {\sl Conformally Einstein
products  and nearly K\"ahler manifolds}, Ann.\ Global Anal.\ Geom.\
{\bf 33}, 11--18 (2008).

\bibitem{OT}
{\sc K. Oeljeklaus, M. Toma}, {\sl Non-Kähler compact complex manifolds associated to number fields}. Ann. Inst. Fourier {\bf 55}, 161--171 (2005).

\bibitem{TM67} {\sc Y.\ Tashiro, K.\ Miyashita}, {\sl Conformal transformations in complete product Riemannian manifolds}, 
J.\ Math.\ Soc.\ Japan {\bf 19}, 328--346 (1967).

\bibitem{tod} {\sc K.P.\ Tod}, {\sl Compact 3-dimensional Einstein-Weyl structures}, J. London Math. Soc. (2) {\bf 45}, 341--351 (1992).

\bibitem{tric} {\sc F.\ Tricceri}, {\sl Some examples of locally
conformal K\"ahler manifolds}, Rend.\ Semin.\ Mat.\ Univ.\ Politecn.\
Torino {\bf 40}, 81--92 (1982).

\bibitem{v} {\sc I.\ Vaisman}, {\sl Locally conformal K\"ahler
manifolds with parallel Lee form},  Rend.\ Mat.\ (6)  {\bf 12},
263--284 (1979).  
  
\bibitem{wang} {\sc M. Wang}, {\sl Parallel Spinors and Parallel Forms, } 
Ann Global Anal. Geom. {\bf 7}, 59--68 (1989).

\bibitem{weyl} {\sc H.\ Weyl,}  {\it Raum. Zeit. Materie, } (German),
Heidelberger Taschenb\"ucher, {\bf 251}, Springer-Verlag, Berlin,
1988.  }
\end{thebibliography}
\end{document}